\newtheorem{thm}{Theorem}[section]
\newaliascnt{lem}{thm}
\newtheorem{qst}[thm]{Question}
\newaliascnt{cnj}{thm}  
\newaliascnt{prp}{thm}  
\newtheorem{prp}[prp]{Proposition}
\newaliascnt{cor}{thm}  
\newtheorem{cor}[cor]{Corollary}
\theoremstyle{definition}
\newaliascnt{dfn}{thm}  
\numberwithin{equation}{section}
\newcommand{\bbN}{\mathbb N}
\newcommand{\cU}{\mathcal U}
\newcommand{\cV}{\mathcal V}
\author{Tristan Bice and Ilijas Farah}
\address{
York University\\
Toronto\\
Canada
}
\email{Tristan.Bice@gmail.com}
\thanks{Partially supported by NSERC
and York University Post-Doctoral Fellowship}
\keywords{C*-Algebras, Traces, Commutators, Ultrapowers}
\subjclass[2010]{Primary: 46L05; Secondary: 46L30}
\date{\today} 
\begin{document}

\title{Traces, Ultrapowers and the Pedersen-Petersen C*-Algebras}

\begin{abstract}
Our motivating question was whether all traces on a $\mathcal{U}$-ultrapower of a C*-algebra $A$, where $\mathcal{U}$ is a non-principal ultrafilter on $\mathbb{N}$, are necessarily $\mathcal{U}$-limits of traces on $A$.  We show that this is false so long as $A$ has infinitely many extremal traces, and even exhibit a $2^{2^{\aleph_0}}$ size family of such traces on the ultrapower.  For this to fail even when $A$ has finitely many traces implies that $A$ contains operators that can be expressed as sums of $n+1$ but not $n$ *-commutators, for arbitrarily large $n$.  We show that this happens for a direct sum of Pedersen-Petersen C*-algebras, and analyze some other interesting properties of these C*-algebras.
\end{abstract}

\maketitle

Let $A$ be a C*-algebra and let $\cU$ be an ultrafilter on $\bbN$. 
Every sequence $\tau_n$, for $n\in \bbN$, of traces on $A$ defines a trace
$\lim_{n\to \cU} \tau_n$ on the ultrapower $A^{\cU}$ (see \S\ref{S.Linear} for definitions). 
We say that such  traces on $A^{\cU}$ 
are \emph{trivial}. Note that $T(A)$ is in duality with $A^{\cU}$ if we extend $\tau\in T(A)$   to $A^{\cU}$
by $\tau((a_n))=\lim_{n\to \cU} \tau(a_n)$ (here $(a_n)\in l^\infty(A)$ is a representing sequence
of an element of $A^{\cU}$), and we have the following diagram.

\begin{tikzpicture}
 \matrix[row sep=0.5cm,column sep=1cm] {
\node  (A) {$A$};
& 
\node  (AU) {$A^{\cU}$};\\
\node  (TA) {$T(A)$};
& 
\node  (TAU) {$T(A)^{\cU}$};
& 
\node  (TAU1) {$T(A^{\cU})$};\\
};
\path (A) edge node[above] {$\subseteq$} (AU); 
\path (A) edge [<->]  (TA); 
\path (TA) edge node[above] {$\subseteq$}   (TAU); 
\path (TAU) edge node[above] {$\subseteq$}  (TAU1); 
\path (AU) edge [<->]  (TAU); 
\path (AU) edge [<->] (TAU1); 
\end{tikzpicture}

The space  $T(A)^{\cU}$ 
 of trivial traces on $A^{\cU}$ and the relative commutant $A^{\cU}\cap A'$ 
have played a pivotal  role in recent progress on classification program for nuclear C*-algebras
(see \cite{MatuiSato2012}). 
Motivated by this, during the BIRS workshop on 
Descriptive Set Theory and Operator Algebras in June 2012 Wilhelm Winter asked  
whether  every trace on the countable ultraproduct of unital,
separable, tracial C*-algebras comes from an ultraproduct of traces?
In an afternoon during the von Neumann conference in M\"unster (also in June 2012), 
the second author, Hiroki Matui, and Wilhelm Winter sketched a proof that 
the Continuum Hypothesis implies a negative answer if the C*-algebras in question are 
separable and have infinite-dimensional trace space.  
It turns out that 
the Continuum Hypothesis is not needed for this conclusion  (see \autoref{traces1} below). 
This  is a consequence of known results about ultrapowers of Banach spaces 
 and it was independently observed by Narutaka Ozawa. 
Now let us say that a trace $\tau$ on $A^{\cU}$ is \emph{locally trivial} if 
for every $a\in A^{\cU}$ there exists a sequence $\tau_n$ of traces on $A$ such that 
$\tau(a)=\lim_{n\to \cU} \tau_n(a)$ (this is weaker than stating that $\tau$ is 
in the weak*-closure of $T(A)$ inside $T(A^{\cU}$)). 
All nontrivial traces constructed in \autoref{traces1} are locally trivial. 
In \autoref{C1}  
we construct a C*-algebra $A$ and a trace in its ultrapower that is not even locally trivial.

\section{Linear Functionals on Normed Space Ultrapowers}
\label{S.Linear} 
Take a normed space $X$ where $\mathbb{F}$ denotes the scalar field, either the reals $\mathbb{R}$ or complex numbers $\mathbb{C}$, with unit ball $X^1=\{x:||x||\leq1\}$.  Let $\mathcal{U}$ be a non-principal ultrafilter on $\mathbb{N}$ and define
\begin{eqnarray*}
l^\infty(X) &=& \{(x_n)\subseteq X:\sup||x_n||<\infty\},\quad\textrm{ and }\\
c_\mathcal{U}(X) &=& \{(x_n)\in l^\infty(X):\lim_{n\rightarrow\mathcal{U}}||x_n||=0\}.
\end{eqnarray*}
As $c_\mathcal{U}(X)$ is a closed subspace of $l^\infty(X)$ (under pointwise addition and scalar multiplication), we can define the ($\mathcal{U}$-)\emph{ultrapower} $X^\mathcal{U}$ of $X$ by \[X^\mathcal{U}=l^\infty(X)/c_\mathcal{U}(X).\]  We denote the image of $(x_n)$ under the quotient map by $(x_n)^\mathcal{U}$.  Also note that, if $A$ is a C*-algebra, then $l^\infty(A)$ is too (under pointwise multiplication) containing $c_\mathcal{U}(X)$ as a (closed two-sided) ideal, making $A^\mathcal{U}$ a C*-algebra too.  Furthermore, the only property of the ultrafilter $\mathcal{U}$ that we use is that it is non-principal, and so most of the results will also apply to the asymptotic sequence space (or C*-algebra) $l^\infty(X)/c_0(X)$, where $c_0(X)=\{(x_n)\subseteq X:\lim_{n\rightarrow\infty}||x_n||=0\}$.

Note that there is a canonical embedding of $X^{*\mathcal{U}}$ in $X^{\mathcal{U}*}$, i.e. $X^{*\mathcal{U}}\subseteq X^{\mathcal{U}*}$.  Likewise, taking the $\mathcal{U}$-limit of a sequence of traces on a C*-algebra $A$ yields a trace on $A^\mathcal{U}$.  Our original motivating question was whether these are the only traces on $A^\mathcal{U}$.  This can be naturally decomposed into two questions, one of which is a special case of whether $X^{*\mathcal{U}}=X^{\mathcal{U}*}$.  This is because every self-adjoint tracial functional has a Jordan decomposition as a difference of traces (similarly to \cite{Pedersen1979} \S 3.2), and the space of self-adjoint tracial functionals on a C*-algebra $A$ is just the (real scalar) dual of $A^q=A_\mathrm{sa}/A_0$, where $A_0=\{\sum x_nx_n^*-x_n^*x_n\textrm{ (norm convergence)}:(x_n)\subseteq A\}$ (see \cite{CuntzPedersen1979}).  Thus we have $(A^\mathcal{U})_0\subseteq(A_0)^\mathcal{U}$ and hence \[A^{q*\mathcal{U}}\subseteq A^{q\mathcal{U}*}\subseteq A^{\mathcal{U}q*}.\]  So every trace on $A^\mathcal{U}$ is a $\mathcal{U}$-limit of traces on $A$ if and only if both these inclusions are actually equalities.

We now show that the first inclusion is strict whenever $A$ has infinitely many extremal traces (we show that the second inclusion can also be strict in the following section).  For normed spaces $X$, we have $X^{*\mathcal{U}}=X^{\mathcal{U}*}$ precisely when $X^\mathcal{U}$ is reflexive, by \cite{Heinrich1980} Proposition 7.1 (which occurs precisely when $X$ itself is superreflexive - see \cite{Heinrich1980} Corollary 7.2 - although we do not need this result here), and so all we have to do is show that $A^{q\mathcal{U}}$ is not reflexive.

\begin{thm}\label{traces1}
If $A$ is a C*-algebra with infinitely many extremal tracial states $(\tau_n)$ then there is a trace on $A^\mathcal{U}$ that is not a $\mathcal{U}$-limit of traces on $A$.
\end{thm}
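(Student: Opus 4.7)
The plan is to use the reduction already set up in the excerpt: by the cited \cite{Heinrich1980} Proposition~7.1, it suffices to show that $A^{q\cU}$ is non-reflexive, and for that it is enough to embed $\ell^\infty_N$ uniformly into $A^q$ for each $N$ with distortion tending to $1$, since the resulting finite-dimensional embeddings can be concatenated along $\cU$ to produce an isometric copy of $c_0$ inside $A^{q\cU}$.

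To construct such an embedding, fix $N$ and $\epsilon>0$, pick $N$ distinct extremal tracial states $\sigma_1,\dots,\sigma_N$ from the given sequence, and exploit their mutual disjointness. Distinct extremal tracial (factor) states are pairwise disjoint, so their central support projections $z_1,\dots,z_N\in A^{**}$ are mutually orthogonal with $\sigma_i(z_j)=\delta_{ij}$. A Kaplansky density argument together with functional-calculus cut-downs then yields pairwise orthogonal positive contractions $a_1,\dots,a_N\in A$ with $|\sigma_i(a_j)-\delta_{ij}|<\epsilon/N$ for all $i,j$. Pairwise orthogonality gives $\|\sum c_i a_i\|_A=\max_i|c_i|$, hence $\|\sum c_i[a_i]\|_{A^q}\le\max_i|c_i|$. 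Conversely, choosing $j$ with $|c_j|=\max_i|c_i|$ and evaluating against $\sigma_j$,
\[
\Bigl\|\sum_i c_i[a_i]\Bigr\|_{A^q}\ge \bigl|\sigma_j\bigl(\sum_i c_i a_i\bigr)\bigr|\ge |c_j|\bigl(1-\tfrac{\epsilon}{N}\bigr)-(N-1)\tfrac{\epsilon}{N}|c_j|=(1-\epsilon)|c_j|,
\]
so $e_i\mapsto[a_i]$ is a $(1+O(\epsilon))$-isomorphic embedding of $\ell^\infty_N$ into $A^q$, as required.

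Once $A^{q\cU}$ has been shown to be non-reflexive, \cite{Heinrich1980} Proposition~7.1 supplies a functional $\phi\in A^{q\cU*}\setminus A^{q*\cU}$. Via the chain $A^{q\cU*}\subseteq A^{\cU q*}$ from the excerpt, $\phi$ is a self-adjoint tracial functional on $A^\cU$ that is not a $\cU$-limit of self-adjoint tracial functionals on $A$. Its Jordan decomposition $\phi=\tau^+-\tau^-$ into traces (as in \cite{Pedersen1979}~\S3.2) then produces a trace on $A^\cU$ with the same failure: if both $\tau^\pm$ were $\cU$-limits of traces on $A$, then $\phi$ would be too, contradicting $\phi\notin A^{q*\cU}$. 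The main technical point is the production of the pairwise orthogonal $a_i$ with near-diagonal trace values: orthogonality of the central supports $z_i$ in $A^{**}$ is immediate from disjointness, but lifting this simultaneously to genuine pairwise orthogonality of positive elements in $A$ while preserving the near-matching of trace values is where the Kaplansky-plus-functional-calculus argument deserves explicit attention.
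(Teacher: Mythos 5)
Your argument is correct but runs on the opposite side of the duality from the paper's. The paper never lifts anything back to $A$: it shows that the sequence $(\tau_n)$ of extremal traces itself spans an isometric copy of $l^1$ inside the dual $A^{q*}$, the key lower estimate coming from pairing $\sum r_n\tau_n$ with the element $\sum\mathrm{sgn}(r_k)p_{\tau_k}$ of $A^{**}$ (legitimate because the norm of a functional on $A$ is computed in the bidual), and then passes non-reflexivity from $A^{q*}$ to $A^q$ to $A^{q\mathcal{U}}\supseteq A^q$ before invoking Heinrich's Proposition 7.1. You instead work on the primal side, embedding $l^\infty_N$ almost isometrically into $A^q$ and concatenating along $\mathcal{U}$ to produce a copy of $c_0$ in $A^{q\mathcal{U}}$. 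Both routes are valid, but yours pays a price for working in $A$ rather than $A^{**}$: the step you flag at the end, producing genuinely pairwise orthogonal positive contractions $a_1,\dots,a_N\in A$ with $\sigma_i(a_j)\approx\delta_{ij}$, is precisely the approximation the paper's bidual trick sidesteps. That step does go through --- use Kaplansky density in $\bigoplus_{i\leq N}\pi_{\sigma_i}$ to approximate $\sum_i(i/N)z_i$ strongly by a positive contraction $b\in A$, then set $a_i=f_i(b)$ for continuous $f_i\geq0$ with disjoint supports and $f_i(j/N)=\delta_{ij}$, invoking strong continuity of the continuous functional calculus on bounded sets --- but it should be carried out rather than left as a remark, since it is the only genuinely C*-algebraic content of your proof. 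Two small compensations of your version: it uses only finite-dimensional information about $A^q$ (finite representability of $l^\infty_N$), in the spirit of the remark following \autoref{mainthm} that copies of $l^1(m)$ rather than all of $l^1$ suffice, and your closing Jordan-decomposition step is spelled out more explicitly than in the paper, where it is left implicit in the introductory discussion.
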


\begin{proof}
For each extremal tracial state $\tau$ on $A$, we have a minimal central projection $p_\tau\in A^{**}$ (seen as the enveloping algebra of $A$ on a Hilbert space $H$), such that $(A|_{\mathcal{R}(p_\tau)})''$ is a factor with unique (normal) tracial state $\phi_\tau$ with $\tau(a)=\phi_\tau(a|_{\mathcal{R}(p_\tau)})$.  For distinct extremal tracial states $\tau$ and $\sigma$, we must therefore have $p_\tau p_\sigma=0$ and hence, for any $m\in\mathbb{N}$ and $r_1,\ldots,r_m\in l^1_\mathbb{R}$, \[\sum_{n\leq m}|r_n|=\sum_{n\leq m}r_n\tau_n(\sum_{k\leq m}\mathrm{sgn}(r_k)p_{\tau_k})\leq||\sum_{n\leq m}r_n\tau_n||\leq\sum_{n\leq m}|r_n|.\]  Thus $(\tau_n)$ is a basis of a copy of $l^1$ in $A^{q*}=(A/A_0)^*$.  As $l^1$ is not reflexive, neither is $A^{q*}$ (see \cite{Megginson1998} Theorem 1.11.16) or, for that matter, $A^q$ (see \cite{Megginson1998} Theorem 1.11.17).  As $A^{q\mathcal{U}}$ contains a copy of $A^q$, $A^{q\mathcal{U}}$ is not reflexive either (again see \cite{Megginson1998} Theorem 1.11.16) and hence $A^{q*\mathcal{U}}\subsetneqq A^{q\mathcal{U}*}\subseteq A^{\mathcal{U}q*}$, by \cite{Heinrich1980} Proposition 7.1.
\end{proof}

\begin{thm}\label{states1}
If $A$ is an infinite dimensional C*-algebra then there is a state on $A^\mathcal{U}$ that is not a $\mathcal{U}$-limit of states on $A$.
\end{thm}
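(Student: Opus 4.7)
The plan is to mirror \autoref{traces1}: use the Heinrich reflexivity criterion to reduce the claim to showing $A^{\mathcal{U}}$ is not reflexive, then contradict reflexivity by embedding $c_0$ isometrically into $A$.

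First, I would assume for contradiction that every state on $A^{\mathcal{U}}$ arises as a $\mathcal{U}$-limit of states on $A$, and upgrade this to the assertion that every bounded linear functional on $A^{\mathcal{U}}$ arises as a $\mathcal{U}$-limit of functionals on $A$. Given any $\phi \in A^{\mathcal{U}*}$, decompose $\phi = \phi_1 - \phi_2 + i(\phi_3 - \phi_4)$ with each $\phi_j$ positive, by splitting into self-adjoint and skew-adjoint parts and applying the Jordan decomposition (cf.\ \cite{Pedersen1979} \S 3.2) to each self-adjoint piece. Each $\phi_j$ is a nonnegative scalar multiple of a state on $A^{\mathcal{U}}$ and hence, by hypothesis, a $\mathcal{U}$-limit of nonnegatively scaled states on $A$; forming the pointwise $\mathbb{C}$-linear combination of the four uniformly bounded approximating sequences in $A^*$ gives a sequence whose $\mathcal{U}$-limit is $\phi$. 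Thus $A^{*\mathcal{U}} = A^{\mathcal{U}*}$, and then \cite{Heinrich1980} Proposition 7.1 forces $A^{\mathcal{U}}$ to be reflexive.

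Second, I would contradict reflexivity by invoking the standard fact that any infinite-dimensional C*-algebra contains a sequence of pairwise orthogonal norm-one positive elements $(a_n)_n$. Concretely, choose a self-adjoint $a \in A$ with infinite spectrum (such an element exists, since a C*-algebra all of whose elements have finite spectrum is finite-dimensional), pick pairwise disjoint open sets $U_n \subseteq \sigma(a) \setminus \{0\}$, and take norm-one bump functions on each via the continuous functional calculus. The closed linear span of $(a_n)_n$ is isometric to $c_0$, so $A$ is not reflexive; since $A$ embeds isometrically into $A^{\mathcal{U}}$ and reflexivity passes to closed subspaces (\cite{Megginson1998} Theorem 1.11.16), $A^{\mathcal{U}}$ is not reflexive either---contradicting the first step.

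The theorem is thus a direct analogue of \autoref{traces1}, with infinite-dimensionality of $A$ replacing the hypothesis of infinitely many extremal traces: both arguments are powered by the Heinrich criterion $X^{*\mathcal{U}} = X^{\mathcal{U}*} \iff X$ is superreflexive, applied to $A^q$ in \autoref{traces1} and to $A$ itself here. There is no substantive obstacle; the only mild bookkeeping issue is verifying uniform boundedness of the combined approximating sequence in the Jordan step, which is immediate since each of the four sequences is bounded by $\|\phi_j\|+o(1)$.
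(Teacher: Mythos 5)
Your proposal is correct and follows essentially the same route as the paper: the paper also reduces the claim to non-reflexivity via Heinrich's Proposition 7.1 and then (in its second, ``alternatively'' variant) exhibits a copy of $c_0$ in $A$ using a self-adjoint element with infinite spectrum and functions with disjoint supports. Your only addition is to spell out the Jordan-decomposition bookkeeping that the paper leaves implicit, which is fine.
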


\begin{proof}
Again, this will follow from \cite{Heinrich1980} Proposition 7.1 once we show that $A$ is not reflexive.  To see this, let $(\phi_n)$ be a sequence of pure states corresponding to an orthonormal sequence of vectors in the Hilbert space of the atomic representation of $A$.  Kadison's trasitivity theorem (actually von Neumann's bicommutant theorem is enough) now shows that this is a basis for a copy of $l^1$ in $A^*$ which, as in the previous proof, shows that $A$ is not reflexive.  Alternatively, note that every infinite C*-algebra contains a self-adjoint element $S$ with infinite spectrum.  Taking a sequence of functions $(f_n)\subseteq C(\sigma(S))$ of sup-norm $1$ with disjoint supports, we see that $f_n(S)$ is a basis of a copy of $c_0$ in $A$ which, as $c_0$ is not reflexive, means that $A$ is not either.
\end{proof}

While these proofs are nice and short, they do not give us any clue as to what these extra traces or states on $A^\mathcal{U}$ might look like or, indeed, how many of them there are.  However, we can answer both these questions with (the proof of) the following result.  Note we make repeated use of Goldstine's theorem, which says the image of $X^1$ under the canonical embedding in its double dual is weakly*-dense in $X^{**1}$ (which is a consequence of a standard separation theorem \textendash\, see \cite{Megginson1998} 2.6.26).\footnote{The operator algebraist may note the similarity between Goldstine's theorem and Kaplansky's density theorem.  Indeed, the latter could be seen as a corollary of the former, once the double dual of a C*-algebra is identified as its universal enveloping algebra (except that this argument is a bit circular, as the proof of this identification usually already involves Kaplansky's density theorem).}

\begin{thm}\label{mainthm}
For any infinite dimensional normed space $X$ such that $X^*$ contains an (isometric) copy of $l^1$, we have $X^{*\mathcal{U}}\subsetneqq X^{\mathcal{U}*}$.  In fact, $X^{\mathcal{U}*}\setminus(X^{*\mathcal{U}}\setminus\{0\})$ contains a copy of $l^1(2^{2^{\aleph_0}})$.
\end{thm}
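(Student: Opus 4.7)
The plan is to use Goldstine's theorem together with the isometric embedding $T\colon l^1 \hookrightarrow X^*$, $Te_n = \phi_n$, to build an isometric quotient map $R\colon X^\cU \twoheadrightarrow l^\infty$; dualize to embed $l^{\infty*}$ isometrically into $X^{\cU*}$; and exhibit the desired copy of $l^1(2^{2^{\aleph_0}})$ via the canonical isometric $l^1(\beta\bbN)$-basis of point evaluations in $l^{\infty*}$, restricted to a suitable subfamily to avoid $X^{*\cU}$.

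Since $T$ is isometric, its adjoint $T^*\colon X^{**} \twoheadrightarrow l^\infty$ is an isometric quotient. Fix $\xi \in l^\infty$ with $\|\xi\|_\infty \le 1$, lift it to $\eta_\xi \in X^{**1}$, and apply Goldstine to pick $y^\xi_n \in X^1$ with $|\phi_m(y^\xi_n) - \xi_m| < 1/n$ for all $m \le n$. Then $R((x_n)^\cU)_m := \lim_{n\to\cU}\phi_m(x_n)$ is well-defined of norm at most $1$, and (after appropriate rescaling for $\|\xi\|_\infty > 1$) the class of $(y^\xi_n)^\cU$ in $X^\cU$ witnesses that $R$ is an isometric quotient. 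Dualization gives an isometric embedding $R^*\colon l^{\infty*} \hookrightarrow X^{\cU*}$. Inside $l^{\infty*}$, the family of point evaluations $\{\delta_p : p \in \beta\bbN\}$ forms an isometric $l^1(\beta\bbN)$-basis: for distinct $p_i \in \beta\bbN$ and scalars $c_i$, zero-dimensionality of $\beta\bbN$ supplies $f \in l^\infty$ of norm one with $f(p_i) = \overline{c_i}/|c_i|$ when $c_i \neq 0$, giving $\bigl\|\sum c_i\delta_{p_i}\bigr\|_{l^{\infty*}} = \sum |c_i|$. Since $|\beta\bbN| = 2^{2^{\aleph_0}}$, composing with $R^*$ transports an isometric copy of $l^1(2^{2^{\aleph_0}})$ into $X^{\cU*}$.

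I expect the main obstacle to be pruning this copy to avoid $X^{*\cU}$ except at zero. The natural route is a density-character argument: $X^{*\cU}$ is a quotient of $l^\infty(X^*)$, whose density character is bounded by $(\operatorname{dens} X^*)^{\aleph_0}$, and in the separable regime this is at most $2^{\aleph_0}$. Since every closed subspace of $l^1(\beta\bbN)$ of density $\kappa$ is supported on at most $\kappa$ coordinates, the preimage $R^{*-1}(X^{*\cU}) \cap l^1(\beta\bbN)$ lies in $l^1(J)$ for some $J \subseteq \beta\bbN$ with $|J| < 2^{2^{\aleph_0}}$. Then $\{R^*\delta_p : p \in \beta\bbN \setminus J\}$ spans the required copy of $l^1(2^{2^{\aleph_0}})$, and its span sits in $l^1(\beta\bbN \setminus J)$ while the bad part lives in $l^1(J)$, so the two meet only at $0$. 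The delicate point is securing the density bound in full generality; this may require either reducing to the separable subspace of $X$ generated by a predual of the $l^1$-copy in $X^*$, or a direct verification that $R^*\delta_p \notin X^{*\cU}$ for non-principal $p$ via the non-commutation of the iterated limits $\lim_{m \to p}\lim_{n \to \cU}\phi_m(x_n)$ versus $\lim_{n \to \cU}\lim_{m \to p}\phi_m(x_n)$ applied to the Goldstine representatives.
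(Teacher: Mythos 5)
Your argument is genuinely different from the paper's and, with one repair, it can be made to work. The paper fixes an ultrafilter $\mathcal V$ on $\mathbb N\times\mathbb N$ projecting to $\mathcal U$ in the first coordinate, defines $x^*((x_m)^{\mathcal U})=\lim_{(m,n)\to\mathcal V}e_n^*x_m$, and rules out $x^*=(x_m^*)^{\mathcal U}$ by a hands-on case analysis on the candidate representing sequence (three further applications of Goldstine's theorem producing test vectors in $X^1$); the $2^{2^{\aleph_0}}$ ultrafilters on $\mathbb N$ then yield the copy of $l^1(2^{2^{\aleph_0}})$. You instead build a metric surjection $R\colon X^{\mathcal U}\to l^\infty$ (your Goldstine step for surjectivity is correct), dualize to embed $l^1(\beta\mathbb N)\subseteq l^{\infty*}$ isometrically into $X^{\mathcal U*}$, and dispose of $X^{*\mathcal U}$ by a soft cardinality argument. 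The one thing you must fix is the reduction that makes the density bound legitimate: the $l^1$-copy in $X^*$ need not have a predual sitting inside $X$ (take $X=l^1$, which contains no copy of $c_0$), so replace ``predual'' by a separable closed subspace $Y\subseteq X$ that norms $\overline{\mathrm{span}}\{\phi_n\}$ (choose almost-norming unit vectors for a countable dense set of rational finite combinations). Then $(\phi_n|_Y)$ is still an isometric $l^1$-basis in $Y^*$, $R$ restricts to the analogous metric surjection $R_Y$ on $Y^{\mathcal U}\subseteq X^{\mathcal U}$, restriction carries $X^{*\mathcal U}$ into $Y^{*\mathcal U}$, and $\operatorname{dens}(Y^{*\mathcal U})\le(2^{\aleph_0})^{\aleph_0}=2^{\aleph_0}$, so your support argument applied to $(R_Y^*)^{-1}(Y^{*\mathcal U})\cap l^1(\beta\mathbb N)$ removes at most $2^{\aleph_0}$ points of $\beta\mathbb N$ and leaves the required copy. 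As for what each approach buys: yours is shorter, avoids the three-case analysis, and makes the $l^1(\beta\mathbb N)$ structure transparent, but it is non-constructive \textendash\ it does not identify which $p\in\beta\mathbb N$ give functionals outside $X^{*\mathcal U}$. The paper's explicit $\mathcal V$-limits are needed later (they are shown to be traces or states when the construction is applied to $A^q$ or $A$, and they drive the Kadison--Singer discussion), and the explicit argument strengthens to $d(x^*,X^{*\mathcal U})=1$ under mild extra hypotheses; neither of these is visible from the density-character route.
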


\begin{proof}
Note that the following subsets of $\mathbb{N}\times\mathbb{N}$ have the finite intersection property
(i.e., the intersection of finitely many elements is always nonempty) 
 \begin{eqnarray*}
V_{\mathrm{diag}} &=& \{(m,n):n\leq m\},\\
V_f &=& \{(m,n):n \neq f(m)\},\textrm{ for }f\in\mathbb{N}^\mathbb{N},\textrm{ and}\\
V_U &=& U\times\mathbb{N},\textrm{ for }U\in\mathcal{U}.
\end{eqnarray*}
Thus we may let $\mathcal{V}$ be an ultrafilter on $\mathbb{N}\times\mathbb{N}$ containing all of them.  Given $(x_m)\in X^\mathbb{N}$, define $x^*((x_m))=\lim_{(m,n)\rightarrow\mathcal{V}}e^*_nx_m$, where $(e_n^*)\subseteq X^*$ denotes the canonical basis for $l^1$.  We will show that $x^*\notin X^{*\mathcal{U}}$.

As $V_U\in\mathcal{V}$, for all $U\in\mathcal{U}$, this yields a well-defined element of $X^{\mathcal{U}*}$.  Also, for each $m\in\mathbb{N}$, the linear functional $e_m^{**}$ on $\mathrm{span}\{e_1^*,\ldots,e_m^*\}$ defined by $e_m^{**}e_k^*=1$, for all $k\leq m$, has norm $1$ (and can be extended to an element of $X^{**}$ of norm $1$, by the Hahn-Banach theorem) and thus, by Goldstine's theorem, there exists $e_m\in X^1$ such that $e_k^*e_m\geq1-1/m$, for each $k\leq m$.  Thus $x^*((e_m)^\mathcal{U})=1$ and hence $1\leq||x^*||\leq\sup||e^*_n||=1$.

Say $x^*=(x_m^*)^\mathcal{U}$, for some $(x_m^*)\subseteq X^*$.  As $||x^*||=1$, we may renormalize if necessary to make $||x_m^*||=1$, for $m\in\mathbb{N}$.  We claim $\lim_{m\rightarrow\mathcal{U}}d(x_m^*,\mathrm{span}\{e_1^*,\ldots,e_m^*\})=0$.  For, if not, there exists $\epsilon>0$ and $U\in\mathcal{U}$ such that, for each $m\in U$, $d(x_m^*,\mathrm{span}\{e_1^*,\ldots,e_m^*\})\geq\epsilon$.  But then the functional $x_m^{**}$ on $\mathrm{span}\{e_1^*,\ldots,e_m^*,x_m^*\}$ with kernel $\{e_1^*,\ldots,e_m^*\}$ and $x_m^{**}x^*=\epsilon$ has norm at most $1$.  Applying Goldstine's theorem, we get $x_m\in X^1$ with $|\epsilon-x_m^*x_m|\leq\epsilon/3$ and $|e_k^*x_m|\leq\epsilon/3$, for all $k\leq m$, and hence $|x^*((x_m)^\mathcal{U})|\leq\epsilon/3$, a contradiction.

Thus we may assume $x_m^*=\sum_{k\leq m}r_{m,k}e_k^*$, for some $(r_{m,k})\subseteq\mathbb{F}^1$ satisfying $\sum_{k\leq m}|r_{m,k}|=1$, for all $m\in\mathbb{N}$.  We claim that $\lim_{m\rightarrow\mathcal{U}}\max_{k\leq m}|r_{m,k}|=0$.  Otherwise, there exists $U\in\mathcal{U}$ such that, for each $m\in U$, $\max_{k\leq m}|r_{m,k}|\geq\epsilon$.  But then the linear functional $y_m^{**}$ on $\mathrm{span}\{e_1^*,\ldots,e_m^*\}$ with $y_m^{**}e_{k_m}^*=1$, where $|r_{m,k}|$ attains its maximum at $k_m$, and $y^{**}_me_k^*=0$, for $k\neq k_m$, has norm $1$, and we again apply Goldstine's theorem to get $y_m\in X^1$ such that we have $|e_{m,k_m}^*y_m-1|\leq1/m$ and $|e_{m,k}^*y_m|\leq1/m$ for $k\neq k_m$.  As $V_{(k_m)}\in\mathcal{V}$, we have $x^*((y_m)^\mathcal{U})=0<\epsilon\leq\lim_{m\rightarrow\mathcal{U}}|x_m^*y_m|$, a contradiction.

So, finally, we must have $U\in\mathcal{U}$ such that, for all $m\in U$, $|r_{m,k}|\leq1/3$, for all $k\leq m$.  But then there is a linear functional $z_m^{**}$ on $\mathrm{span}\{e_1^*,\ldots,e_m^*\}$ with $z_m^{**}e_k^*=0$ or $|z_m^{**}e_k^*|=1$, for all $k\leq m$, and $1/3\leq z_m^{**}x_m^*\leq2/3$.  One more application of Goldstine's theorem gets us $(z_m)$ with $1/3-1/m\leq x_m^*z_m\leq2/3+1/m$ and $|e_k^*z_m|\notin(1/m,1-1/m)$.  Then $|x^*((z_m)^\mathcal{U})|\in\{0,1\}$ while $1/3\leq\lim_{m\rightarrow\mathcal{U}}x_m^*z_m\leq2/3$, another contradiction.

To prove the last statement in the theorem, let $(W_n)$ be a sequence of disjoint subsets of $V_\mathrm{diag}$ such that, for each $n\in\omega$, $|W_n\cap(\{m\}\times\mathbb{N})|\rightarrow\infty$ as $m\rightarrow\infty$ and, for each $X\subseteq\mathbb{N}$, set $W_X=\cup_{n\in X}W_n$.  For each ultrafilter $\mathcal{X}$ on $\mathbb{N}$, the subsets $(W_X)_{X\in\mathcal{X}}$ will have the finite intersection property with sets at the start, and may thus be extended to an ultrafilter $\mathcal{V}_\mathcal{X}$, yielding $x^*_\mathcal{X}\in X^{\mathcal{U}*}\setminus X^{*\mathcal{U}}$.  As there are $2^{2^{\aleph_0}}$ ultrafilters on $\mathbb{N}$ (this follows from the existence of an independent family of subsets of $\mathbb{N}$ of size $2^{\aleph_0}$ \textendash\, see \cite{Kunen1980} Chapter VIII Exercise A6), there are $2^{2^{\aleph_0}}$ such linear functionals and Goldstine's theorem again shows that the closure of their span is a copy of $l^1(2^{2^{\aleph_0}})$ contained in $X^{\mathcal{U}*}\setminus(X^{*\mathcal{U}}\setminus\{0\})$.
\end{proof}

Note that the statement of the theorem above, while quite natural, is not the strongest that could be made from the given proof.  For one thing, we do not actually require $X^*$ to contain a copy of the entirety of $l^1$, we only really require it to contain a copy of $l^1(m)$, for all $m\in\mathbb{N}$.  In fact the same result would hold if we had an ultraproduct of a sequence of normed spaces $(X_m)$, rather than an ultrapower of a single $X$, so long as each $X_m^*$ contained a copy of $l^1(m)$.  Also, it would even be sufficient for each $X_m^*$ to contain an approximately isometric copy of $l^1(m)$, so long as the approximations get arbitrarily close as $n$ increases (in the terminology of \cite{HensonMoore1974}, it suffices that $l^1$ is finitely $1$-representable in $X$, or, in the terminology of \cite{Day1973}, it suffices that $X$ mimics $l^1$).

A slightly more interesting strengthening can be obtained if we assume that the basis of the copy of $l^1(m)$ can always be extended to a basis of (a copy of) $l^1(m+1)$ such that $l^1(m)+\mathbb{F}x_m^*=l^1(m+1)$, given any $x_m^*\in X^*\backslash l^1(m)$ (which is satisfied by $l^1$ itself, for example).  For then we can actually show that, for the $x^*\in X^{\mathcal{U}*}$ in the above proof,
\begin{equation}\label{dx*}
d(x^*,X^{*\mathcal{U}})=1.
\end{equation}
To see this, keep the first paragraph of the above proof and replace the rest with the following argument.  Write $x_m^*=s_mf_m^*+\sum_{k\leq m}r_{m,k}e_k^*$, where $f_m^*$ is the extra basis vector of $l^1(m+1)$, and $\sum_{k\leq m}|r_{m,k}|=||(x_m^*)^\mathcal{U}||=M$, say, for all $m\in\mathbb{N}$.  We fix $n\in\mathbb{N}$ and, for each $m\geq n$, let $S_m$ be an $n$-element subset of $\{1,\ldots,m\}$ on which $|r_{m,k}|$ obtains its $n$ largest values, for $k\leq m$.  Now partition $\{1,\ldots,m\}\backslash S_m$ into sets $(S_{m,j})_{j\leq n}$ such that $\sum_{k\in S_{m,j}}|r_{m,k}|\leq 2M/n$, for each $j\leq n$.  We then define $z_{m,j}^{**}\in X^{**1}$ by $z_{m,j}^{**}f_m^*=0$, $z_{m,j}^{**}e_k^*=1$, if $k\in S_{m,j}$, and $z_{m,j}^{**}e_k^*=0$ otherwise.  Applying Goldstine's theorem in the standard way gives us $(z_{m,j})\subseteq X^1$ such that $x^*((z_{m,j})^\mathcal{U})=1$, for some $j\leq n$, even though $\lim_{m\rightarrow\mathcal{U}}|x_m^*z_{m,j}|\leq2M/n$.  As $n$ was arbitrary, we have $||x^*-(x_m^*)^\mathcal{U}||\geq1$ which, as $(x_m^*)\in X^{*\mathcal{U}}$ was arbitrary, yields (\ref{dx*}).


If we happen to know that $X$ contains a copy of $c_0$, then Hahn-Banach yields a copy of $l^1$ in $X^*$ and so \autoref{mainthm} applies.  However, the proof is even easier then, as we can use this copy of $c_0$ to avoid all but the second application of Goldstine's theorem in the proof of \autoref{mainthm}.  In fact, there is another shortcut if we just want to show that $X^*$ contains a copy of $l^1(2^{2^{\aleph_0}})$.  For if we let $e'_{m,n}=0$, for $m<n$, and $e'_{m,n}=e_n$ otherwise then $((e'_{m,n})^\mathcal{U})_{n\in\mathbb{N}}$ is the canonical basis for a copy of $l^\infty$ in $X^\mathcal{U}$.  Then the closed linear span of the linear functionals on $l^\infty$ defined by ultrafilters is the required copy of $l^1(2^{2^{\aleph_0}})$ in $X^{\mathcal{U}*}$.  Another indication of the large size of $(c_0)^\mathcal{U}$ is the fact that it contains a copy of $c_0(2^{\aleph_0})$.  In fact the same argument yields the following.

\begin{prp}\label{P.lp}
For $1\leq p<\infty$, $(l^p)^{\mathcal{U}}$ contains a copy of $l^p(2^{\aleph_0})$.
\end{prp}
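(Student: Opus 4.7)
The plan is to exhibit $2^{\aleph_0}$ unit vectors in $(l^p)^{\cU}$ whose finite linear combinations realize exactly the $l^p$-norm of their coefficient tuples; the closed linear span of these vectors will then furnish the desired isometric copy of $l^p(2^{\aleph_0})$ inside $(l^p)^{\cU}$. The core ingredient is the standard set-theoretic fact that there exists an almost disjoint family $\{A_\alpha\}_{\alpha\in 2^{\aleph_0}}$ of infinite subsets of $\bbN$, i.e. $|A_\alpha\cap A_\beta|<\infty$ whenever $\alpha\neq\beta$ (obtained, e.g., from the $2^{\aleph_0}$ branches through the binary tree $2^{<\omega}$, which is countable and can be identified with $\bbN$).

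Enumerate each $A_\alpha$ increasingly as $a^\alpha_1<a^\alpha_2<\cdots$, let $(e_n)$ denote the canonical $l^p$-basis, and set $x^\alpha_m:=e_{a^\alpha_m}\in l^p$, which is a unit vector, and $\xi_\alpha:=(x^\alpha_m)^{\cU}\in(l^p)^{\cU}$. The key observation is that for any finite $F\subseteq 2^{\aleph_0}$ the indices $\{a^\alpha_m:\alpha\in F\}$ become pairwise distinct for all sufficiently large $m$. Indeed, for distinct $\alpha,\beta\in F$, almost disjointness gives that $A_\alpha\cap A_\beta$ is a finite set, so as soon as $m$ is large enough that $a^\alpha_m>\max(A_\alpha\cap A_\beta)$ we must have $a^\alpha_m\notin A_\beta$, and in particular $a^\alpha_m\neq a^\beta_m$. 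Taking the maximum over the finitely many pairs in $F$, we conclude that for all $m$ in some cofinite (hence $\cU$-large) set and any scalars $\{c_\alpha\}_{\alpha\in F}$ the vectors $\{x^\alpha_m\}_{\alpha\in F}$ are disjointly supported standard basis vectors, giving
\[
\Bigl\|\sum_{\alpha\in F}c_\alpha x^\alpha_m\Bigr\|_{l^p}=\Bigl(\sum_{\alpha\in F}|c_\alpha|^p\Bigr)^{1/p}.
\]
Passing to the $\cU$-limit yields $\|\sum_{\alpha\in F}c_\alpha\xi_\alpha\|_{(l^p)^{\cU}}=(\sum_{\alpha\in F}|c_\alpha|^p)^{1/p}$.

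Hence the assignment $e_\alpha\mapsto\xi_\alpha$ extends by linearity and continuity to an isometric embedding of $l^p(2^{\aleph_0})$ into $(l^p)^{\cU}$. There is no real obstacle beyond invoking the existence of the almost disjoint family; this is precisely the argument sketched in the preceding paragraph for $c_0$, with the $c_0$-sup norm replaced by the $l^p$-norm, which agrees on linear combinations of disjointly supported unit vectors in both cases.
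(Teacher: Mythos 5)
Your proof is correct and follows essentially the same route as the paper's: both arguments produce $2^{\aleph_0}$ index sequences that are eventually pairwise distinct (the paper takes branches through the binary tree directly, you derive this from an almost disjoint family of sets), map them to sequences of standard basis vectors, and pass to the ultrapower to get the canonical basis of an isometric copy of $l^p(2^{\aleph_0})$. Your write-up just makes explicit the norm computation that the paper leaves to the reader.
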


\begin{proof}
First use a standard argument to construct a continuum size almost disjoint family $\mathcal{A}\subseteq\mathbb{N}^\mathbb{N}$.  Specifically, let $f:\{0,1\}^{<\omega}\rightarrow\mathbb{N}$ be one-to-one and, for any $g\in\{0,1\}^\omega$, define $a_g(n)=f(g\upharpoonright n)$.  If $g(n)\neq h(n)$ then $a_g(m)\neq a_h(m)$, for all $m>n$, and so $(a_g)_{g\in\{0,1\}^\omega}$ is the required family.  Now simply note that if $(e_n)$ is the canonical basis for $l^p$ and, for each $a\in\mathbb{N}^\mathbb{N}$, we let $e_a=(e_{h(n)})^\mathcal{U}$ then $(e_a)_{a\in\mathcal{A}}$ will be the canonical basis for a copy of $l^p(2^{\aleph_0})$ in $(l^p)^{\mathcal{U}}$.
\end{proof}

Of course the above is a special case of the  fact  that if a model contains an 
infinite indiscernible set then its ultrapower contains an indiscernible set of cardinality $2^{\aleph_0}$ (cf. proof of Theorem~5.6 (1) in \cite{FarahHartSherman2012}).

As $l^1(2^{\aleph_0})^*=l^\infty(2^{\aleph_0})$ \autoref{P.lp} implies 
 that, for any normed space $X$ containing a copy of $l^1$, $X^{\mathcal{U}*}$ will contain $2^{2^{\aleph_0}}$ elements a distance of at least $1$ away from each other.  In particular, if we also had $|X^*|=2^{\aleph_0}$ then we would again have $X^{*\mathcal{U}}\subsetneqq X^{\mathcal{U}*}$ (although again, such an $X$ would not be reflexive and so this also follows immediately from \cite{Heinrich1980} Proposition 7.1).

Getting back to traces and states C*-algebras, we see that the proofs of \autoref{traces1} and \autoref{states1}, together with \autoref{mainthm}, immediately yield the following corollaries.

\begin{cor}
If $A$ is a C*-algebra with infinitely many extremal tracial states $(\tau_n)$ then there are $2^{2^{\aleph_0}}$ traces on $A^\mathcal{U}$ that are not $\mathcal{U}$-limits of traces on $A$.
\end{cor}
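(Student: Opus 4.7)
The plan is to apply \autoref{mainthm} to the normed space $X = A^q = A_{\mathrm{sa}}/A_0$. The proof of \autoref{traces1} already established everything needed to verify the hypotheses: the sequence of extremal tracial states $(\tau_n)$ is an isometric basis of a copy of $l^1$ inside $A^{q*}$, and in particular $A^{q*}$, hence $A^q$ itself, is infinite dimensional. \autoref{mainthm} therefore produces a copy of $l^1(2^{2^{\aleph_0}})$ inside $A^{q\mathcal{U}*}\setminus(A^{q*\mathcal{U}}\setminus\{0\})$.

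From here I would just chase the inclusion $A^{q\mathcal{U}*} \subseteq A^{\mathcal{U}q*}$ recorded at the start of the paper: each of the $2^{2^{\aleph_0}}$ canonical basis vectors of this $l^1(2^{2^{\aleph_0}})$ is a self-adjoint tracial functional on $A^{q\mathcal{U}}$, and so lifts to a self-adjoint tracial functional on $A^\mathcal{U}$. None of these basis vectors (other than $0$) lies in $A^{q*\mathcal{U}}$, so none of them is a $\mathcal{U}$-limit of traces on $A$, and they are linearly independent, giving the required count of $2^{2^{\aleph_0}}$.

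The only thing to notice beyond invoking \autoref{mainthm} is that the corollary speaks of \emph{traces} rather than arbitrary self-adjoint tracial functionals. This is handled by reading the proof of \autoref{mainthm} with the explicit data at hand: the basis vectors $x^*_\mathcal{X}$ produced there have the form $x^*_\mathcal{X}((a_m)^\mathcal{U})=\lim_{(m,n)\to\mathcal{V}_\mathcal{X}}\tau_n(a_m)$ where the dual basis $e_n^*$ is exactly $\tau_n$. Since each $\tau_n$ is a positive tracial state and positivity is preserved under ultrafilter limits, every $x^*_\mathcal{X}$ is a genuine (positive) trace on $A^\mathcal{U}$, not merely a self-adjoint tracial functional.

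There is no real obstacle: the substantive work was already done in \autoref{traces1} (to exhibit $l^1 \hookrightarrow A^{q*}$) and in \autoref{mainthm} (to extract $l^1(2^{2^{\aleph_0}})$ from a single copy of $l^1$ in the dual). The corollary is essentially their concatenation, with the brief positivity remark above the only extra ingredient.
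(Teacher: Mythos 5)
Your proposal is correct and is essentially the paper's own argument: the paper derives this corollary by exactly the concatenation you describe, feeding the isometric copy of $l^1$ in $A^{q*}$ from the proof of \autoref{traces1} into \autoref{mainthm}, and it likewise notes separately that the $\mathcal{V}$-limits of traces on $A$ are genuine traces on $A^\mathcal{U}$, which is your positivity remark.
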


\begin{cor}
If $A$ is an infinite dimensional C*-algebra then there are $2^{2^{\aleph_0}}$ states on $A^\mathcal{U}$ that are not $\mathcal{U}$-limits of states on $A$.
\end{cor}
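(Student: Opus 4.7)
The plan is to combine the construction from \autoref{mainthm} with the particular choice of $l^1$-basis used in the proof of \autoref{states1}, and then observe that the resulting functionals are automatically positive and of norm one.

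First, following the proof of \autoref{states1}, I would fix a sequence of pure states $(\phi_n)$ on $A$ arising from an orthonormal sequence of vectors in the Hilbert space of the atomic representation of $A$; Kadison's transitivity theorem shows that $(\phi_n)$ is the canonical basis of an isometric copy of $l^1$ inside $A^*$. I would then run the construction from the last paragraph of the proof of \autoref{mainthm} with $e_n^* := \phi_n$: for each ultrafilter $\mathcal{X}$ on $\mathbb{N}$, this produces an ultrafilter $\mathcal{V}_\mathcal{X}$ on $\mathbb{N}\times\mathbb{N}$ and a linear functional
\[
x_\mathcal{X}^*\bigl((a_m)^\mathcal{U}\bigr)=\lim_{(m,n)\to\mathcal{V}_\mathcal{X}}\phi_n(a_m)
\]
in $A^{\mathcal{U}*}\setminus A^{*\mathcal{U}}$.

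Next, I would verify that each $x_\mathcal{X}^*$ is a state on $A^\mathcal{U}$. Norm one is already supplied by \autoref{mainthm}. For positivity, any positive element of $A^\mathcal{U}$ admits a representing sequence $(a_m)$ of positive elements of $A$, and since each $\phi_n$ is a state we have $\phi_n(a_m)\geq 0$ for all $m,n$, so the $\mathcal{V}_\mathcal{X}$-limit is nonnegative. Thus $x_\mathcal{X}^*$ is a positive linear functional of norm one. Distinctness, and in fact uniform separation, of the family $\{x_\mathcal{X}^* : \mathcal{X} \text{ an ultrafilter on } \mathbb{N}\}$ follows from the final assertion of \autoref{mainthm}, namely that the closed linear span of these functionals contains a copy of $l^1(2^{2^{\aleph_0}})$. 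Since each $x_\mathcal{X}^*$ lies outside $A^{*\mathcal{U}}$, none of them is the $\mathcal{U}$-limit of any bounded sequence in $A^*$, and in particular none is the $\mathcal{U}$-limit of a sequence of states on $A$.

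The only point that genuinely requires care is the positivity step, since \autoref{mainthm} is proved for abstract normed spaces and makes no use of order structure. Once the abstract $l^1$-basis $(e_n^*)$ is taken to be a sequence of pure states, positivity of each $x_\mathcal{X}^*$ is immediate from the fact that ultralimits preserve nonnegativity, so no serious obstacle remains.
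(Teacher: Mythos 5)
Your proposal is correct and follows essentially the same route as the paper, which simply observes that the $l^1$-copy spanned by the pure states $(\phi_n)$ from the proof of \autoref{states1} feeds into \autoref{mainthm} to yield the corollary. Your explicit verification that the resulting functionals $x^*_\mathcal{X}$ are positive (and hence states, being positive of norm one) is a detail the paper leaves implicit, and you handle it correctly.
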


The following may be worth recording.

\begin{prp} 
With $A$ and $\cU$ as above, every $\cU$-limit 
of extremal traces on $A$ is an extremal trace on $A^{\cU}$ and
every $\cU$-limit 
of extremal states on $A$ is an extremal state on $A^{\cU}$. 
\end{prp}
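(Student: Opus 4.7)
My strategy is to realise the $\cU$-limit functional as the pullback of an extremal object on an ultraproduct of the associated GNS data, and then transfer extremality back to $A^\cU$.

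For the trace case, write $\tau=\lim_{n\to\cU}\tau_n$ with each $\tau_n$ extremal, and let $M_n=\pi_{\tau_n}(A)''$ be the factor arising from the GNS construction of $\tau_n$, carrying the unique tracial state $\phi_n$. The plan is to form the tracial von Neumann ultraproduct $M^\cU=\prod_\cU(M_n,\phi_n)$, which is again a (finite) factor with unique trace $\phi$. The coordinate-wise map $\iota:A^\cU\to M^\cU$ sending $(a_n)^\cU$ to the class of $(\pi_{\tau_n}(a_n))$ is a well-defined $*$-homomorphism (since operator norm dominates $\|\cdot\|_{2,\phi_n}$), satisfies $\phi\circ\iota=\tau$, and is surjective by $2$-norm Kaplansky density applied along $\cU$. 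Any tracial decomposition $\tau=\tfrac{1}{2}(\rho_1+\rho_2)$ on $A^\cU$ then has $\rho_i\le 2\tau$, which combined with Cauchy--Schwarz forces each $\rho_i$ to vanish on $\ker\iota$; the $\rho_i$ therefore descend to traces $\tilde\rho_i$ on $M^\cU$ with $\phi=\tfrac{1}{2}(\tilde\rho_1+\tilde\rho_2)$, and uniqueness of the trace on the factor $M^\cU$ gives $\tilde\rho_i=\phi$, hence $\rho_i=\tau$.

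For the state case, write $\phi=\lim_{n\to\cU}\phi_n$ with each $\phi_n$ pure, and let $(\pi_n,H_n,\xi_n)$ be the corresponding irreducible GNS triples. I would form the Hilbert space ultraproduct $H=\prod_\cU H_n$, define $\tilde\pi:A^\cU\to B(H)$ componentwise by $\tilde\pi((a_n)^\cU)(\eta_n)^\cU=(\pi_n(a_n)\eta_n)^\cU$, and set $\xi=(\xi_n)^\cU$; a direct computation gives $\langle\tilde\pi(x)\xi,\xi\rangle=\phi(x)$. The key step is then to show that $\tilde\pi(A^\cU)$ acts transitively on the unit sphere of $H$: given representatives $\eta_n,\eta'_n\in H_n$ of norm $1$, Kadison's transitivity theorem applied to each irreducible $\pi_n$ produces $a_n\in A$ with $\pi_n(a_n)\eta_n=\eta'_n$ and $\|a_n\|$ bounded by a universal constant, and $(a_n)^\cU$ sends $(\eta_n)^\cU$ to $(\eta'_n)^\cU$. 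Transitivity on unit vectors simultaneously forces cyclicity of $\xi$ and triviality of the commutant of $\tilde\pi$, so $(H,\tilde\pi,\xi)$ must be the GNS representation of $\phi$ and it is irreducible, i.e.\ $\phi$ is pure.

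The main technical point in both halves is the availability of uniform norm bounds: $2$-norm Kaplansky density on the tracial side, and Kadison's transitivity theorem on the state side, must produce the required approximating or transitive elements with bounds independent of $n$ so that they assemble into genuine elements of $l^\infty(A)$. Both are classical with explicit universal constants, so the obstacle is essentially bookkeeping; once it is handled, extremality of $\tau$ reduces to the factor property of the tracial ultraproduct $M^\cU$, and extremality of $\phi$ reduces to irreducibility of $\tilde\pi$.
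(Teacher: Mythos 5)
Your proof is correct, but it takes a genuinely different route from the paper's. The paper deduces both halves from the continuous model theory version of \L os's theorem after expressing extremality by a uniformly quantified sup-inf formula: purity via the maximal norm centred characterization from \cite{Bice2011}, and extremality of a trace via the factoriality criterion extracted from the proof of \cite{FarahHartSherman2012} Proposition 3.4. You instead transport the $\cU$-limit to an ultraproduct of the GNS data --- the tracial von Neumann ultraproduct of the finite factors $\pi_{\tau_n}(A)''$ on the trace side, the Hilbert space ultraproduct of the irreducible GNS spaces on the state side --- and read off extremality from uniqueness of the trace on a finite factor, respectively irreducibility of the assembled representation. The two arguments conceal the same uniformity in different places: your trace half needs the classical fact that a tracial ultraproduct of finite factors is again a factor, which rests on a uniform estimate bounding $\|x-\tau(x)1\|_2$ by $\sup_u\|[x,u]\|_2$ with a constant independent of the factor (essentially the inequality the paper packages into its formula), while your state half needs Kadison transitivity with a norm bound independent of $n$, which is the quantitative content behind the paper's norm-centred criterion. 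Your approach buys an explicit identification of the GNS data of the limit functional and avoids the model-theoretic framework; the paper's buys brevity and makes clear that extremality is axiomatizable, so that it transfers verbatim to arbitrary ultraproducts. Two points you should make explicit if you write this up: surjectivity of $\iota$ requires lifting the Kaplansky approximants from $\pi_{\tau_n}(A)$ back to elements of $A$ of essentially the same norm (possible because a quotient map of C*-algebras maps open balls onto open balls), and the uniqueness of the tracial state on $M^\cU$ must be among \emph{all} tracial states, not only the normal ones, which follows from the Dixmier averaging property of finite factors.
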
 

\begin{proof} 
  For states, this follows from the continuous model theory version of \L os's theorem (see \cite{FarahHartSherman2012} Proposition 4.3), because a state $\phi$ is extremal if and only if \[\sup_{Q\in A^1_+}(\inf_{P\in A^1_+,\phi(P)=1}||PQP||-\phi(Q))=0\] (by \cite{Bice2011} Theorem 2.4, because this implies that the subset of $A^1_+$ on which $\phi$ is $1$ is \emph{maximal} norm centred).  For traces, this again follows from \L os's theorem and the fact that a trace is extremal if and only if it gives rise to a factor representation in the GNS construction, and hence if and only if it satisfies \[\sup_{Q\in A^1_+}(\sqrt{\tau(Q^2)-\tau(Q)^2}-\sup_{P\in A^1_+}\sqrt{\tau(-(PQ-QP)^2)})=0\] (see the proof of \cite{FarahHartSherman2012} Proposition 3.4 (1) and replace the non-trivial central projection $p$ in the weak/strong closure of $\pi_\tau[A]$ with $q\in A^1_+$ such that $||(p-\pi_\tau(q))\xi_\tau||$ is sufficiently small).
\end{proof} 

Now consider the ultrafilter 
$\cV$ constructed from $\cU$ in the proof of \autoref{mainthm}. 
As shown in the course of  this proof, $\cV$-limits of traces of $A$ are traces of $A^{\cU}$, 
and similarly $\cV$-limits of states of $A$ are states of $A^{\cU}$. 
We can thus ask whether  $\mathcal{V}$-limits of extremal states or traces are themselves extremal.  
Unlike the case of $\cU$-limits, the answer depends on  
the C*-algebra we are taking the ultrapower of.  If $A$ is $c_0$ (or its unitization) then it does indeed follow that a $\mathcal{V}$-limit of pure states/traces is extremal, as the projections in $A^\mathcal{U}$ corresponding to the elements of $\mathcal{V}$ form a maximal (norm) centred subset (and, as $c_0$ has real rank zero, it suffices to consider subsets of projections rather than positive contractions \textendash\, see \cite{Bice2011} \S4).  However, if $A$ is the C*-algebra of compact operators $\mathcal{K}(H)$ (or its unitization) on a separable infinite dimensional Hilbert space $H$, then verifying that $\mathcal{V}$-limits of pure states on $A$ are pure states on $A^\mathcal{U}$ already amounts to verifying something very similar to the long standing Kadison-Singer conjecture (see e.g.,  \cite{CasazzaTremain2006}, \cite{Weaver2004}). 
  For note that if we made $\inf_{j\in X_k}||P_ke_j||=1$ in (*) below (so $P_k=P_{X_k}$), as well as making $||PP_k||\leq1-\delta$, for some $\delta>0$ depending only on $\epsilon$, we would have a statement equivalent to the Kadison-Singer conjecture, for then it would imply \cite{Bice2011} Theorem 5.1 (v) and follow from \cite{Bice2011} Theorem 5.1 (iv) (although it remains to be seen whether (*) really is a weakening, for it could well be equivalent to the Kadison-Singer conjecture or, on the other hand, perhaps easily shown to be true without verifying the Kadison-Singer conjecture).

\begin{thm}
Let $(\phi_m)$ be pure states on $\mathcal{K}(H)$ corresponding to an orthonormal sequence $(e_m)$ of vectors on $H$.  If the Kadison-Singer conjecture holds then their $\mathcal{V}$-limit $\phi$ on $\mathcal{K}(H)^\mathcal{U}$ is pure, for all $\mathcal{V}$ satisfying the hypothesis of \autoref{mainthm}.  On the other hand, all such $\mathcal{V}$-limits are pure then the following weakening of the Kadison-Singer conjecture holds
\begin{itemize}
\item[(*)] For all $\epsilon>0$ there exists $n\in\mathbb{N}$ such that for all $P\in\mathcal{P}(\mathcal{K}(H))$ with $\sup||Pe_k||\leq1-\epsilon$ and all $m\in\mathbb{N}$ there exists a partition $X_1,\ldots,X_n$ of $m$ and $P_1,\ldots,P_n\in\mathcal{P}(\mathcal{K}(H))$ such that $\inf_{j\in X_k}||P_ke_j||>||PP_k||$, for all $k\leq m$.
\end{itemize}
\end{thm}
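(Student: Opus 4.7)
I would prove the two implications separately.

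Forward direction (KS $\Rightarrow$ $\phi$ pure). Using the extremality criterion from the preceding proposition, it suffices to show, for every $Q=(Q_m)^{\mathcal U}\in\mathcal K(H)^{\mathcal U,1}_+$ and every $\delta>0$, the existence of a positive contraction $P$ with $\phi(P)=1$ and $\|PQP\|\le\phi(Q)+O(\delta)$. For each $m$, partition $\mathbb N$ into the $s=\lceil 1/\delta\rceil$ classes $Y_1^m,\dots,Y_s^m$ determined by which length-$\delta$ subinterval of $[0,1]$ contains $\langle Q_me_n,e_n\rangle$; since the range of the class-index function is finite, its $\mathcal V$-limit picks out one class $Y_{j^*}^m$ on which the diagonal of $Q_m$ is within $\delta$ of $\phi(Q)$. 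The compression $P_{Y_{j^*}^m}Q_mP_{Y_{j^*}^m}$ then splits as an almost-constant diagonal plus an off-diagonal $R_m$ having zero diagonal in $B(\ell^2(Y_{j^*}^m))$ and norm $\le 2$, so Anderson's paving (equivalent to the Kadison-Singer conjecture) yields a partition $X_1^m,\dots,X_r^m$ of $Y_{j^*}^m$, with $r=r(\delta)$ independent of $m$, such that $\|P_{X_k^m}R_mP_{X_k^m}\|<2\delta$ for every $k$. A second $\mathcal V$-limit on the paver index selects some $k^*$, and setting $P:=(P_{X_{k^*}^m\cap\{1,\dots,m\}})^{\mathcal U}$ gives a projection in the filter (each $P_m$ is finite rank, and $\phi(P)=1$ follows by intersecting with $V_{\mathrm{diag}}$); the bound $\|PQP\|\le\phi(Q)+3\delta$ then drops out of a triangle estimate separating the paved off-diagonal from the almost-constant diagonal.

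Reverse direction (contrapositive). Assume (*) fails with witness $\epsilon_0$, so for each $n$ we have a projection $P^{(n)}\in\mathcal P(\mathcal K(H))$ with $\sup_k\|P^{(n)}e_k\|\le 1-\epsilon_0$ and, by propagating witnesses upward, an integer $m^{(n)}\ge n$ which we may assume strictly increasing. Set $\ell(m):=\min\{n:m^{(n)}\ge m\}$, so $\ell(m)\to\infty$ with $m^{(\ell(m))}\ge m$, and define $T:=(P^{(\ell(m))})^{\mathcal U}$. Then $\phi(T)\le(1-\epsilon_0)^2$, so if $\phi$ were pure the extremality criterion would yield a projection $P'=(P'_m)^{\mathcal U}$ with $\phi(P')=1$ and $\|P'_mP^{(\ell(m))}\|<1-\epsilon_0/2$ along $\mathcal U$ (choosing $\delta<\epsilon_0/2$). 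Write $N_{m,\delta}:=\{n\le m:\|P'_me_n\|>1-\delta\}$; a standard fibre-counting argument against $\{V_f:f\in\mathbb N^{\mathbb N}\}\subseteq\mathcal V$ forces $|N_{m,\delta}|\to\infty$ along $\mathcal U$. Inflating $m^{(n)}$ further upward if necessary so that $|N_{m,\delta}|\ge\ell(m)$ for $\mathcal U$-most $m$, pick good indices $n_1,\dots,n_{\ell(m)}\in N_{m,\delta}\subseteq\{1,\dots,m^{(\ell(m))}\}$, partition $\{1,\dots,m^{(\ell(m))}\}$ as the singletons $\{n_i\}$ for $i<\ell(m)$ together with the catch-all $X_{\ell(m)}:=\{1,\dots,m^{(\ell(m))}\}\setminus\{n_1,\dots,n_{\ell(m)-1}\}$, and choose $Q_k:=P'_m$ for $k<\ell(m)$ and $Q_{\ell(m)}:=P_{X_{\ell(m)}}$. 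The negation of (*) at level $\ell(m)$ then produces some $k$ with $\inf_{j\in X_k}\|Q_ke_j\|\le\|P^{(\ell(m))}Q_k\|$: either $k<\ell(m)$ gives $\|P^{(\ell(m))}P'_m\|\ge 1-\delta$, a direct contradiction with the earlier bound, or $k=\ell(m)$ produces a unit vector $v_m\in\mathrm{range}(P^{(\ell(m))})\cap\mathrm{span}\{e_j:j\in X_{\ell(m)}\}$.

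The main obstacle is ruling out the second outcome uniformly in $m$. My intended resolution is to exploit the freedom in selecting the singleton subset: varying over all $\binom{|N_{m,\delta}|}{\ell(m)-1}$ choices of $(\ell(m)-1)$-subsets of $N_{m,\delta}$, if case~2 persisted for every choice, the resulting family of unit vectors in $\mathrm{range}(P^{(\ell(m))})$ indexed by their avoided good-index subsets would, by a dimension/pigeonhole comparison against $\mathrm{rank}(P^{(\ell(m))})$, eventually force a vector in $\mathrm{range}(P^{(\ell(m))})$ supported predominantly on $N_{m,\delta}$, hence well-aligned with $\mathrm{range}(P'_m)$, again contradicting the assumed bound. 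The technically delicate point is simultaneously calibrating the growth rates of $\ell(m)$, $m^{(\ell(m))}$, and $|N_{m,\delta}|$ so that this counting argument actually closes; this is where the real work lies.
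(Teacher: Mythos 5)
Your forward direction is essentially sound and amounts to the same use of the Kadison--Singer conjecture as the paper, just packaged differently: you bin the diagonal of $Q_m$ into $O(1/\delta)$ classes, let $\mathcal{V}$ select one, and then pave the zero-diagonal remainder with Anderson's theorem, checking purity against the $\inf_P\|PQP\|=\phi(Q)$ criterion; the paper instead works with the family of projections on which $\phi$ is $1$ and shows it is maximal norm centred using a two-sided paving statement ($\|PP_{X_k}\|^2+\|P^\perp P_{X_k}\|^2<1+\epsilon$ with $n=n(\epsilon)$). Both routes need the same uniformity of the number of blocks in $m$ so that one block lands in $\mathcal{V}$, and you have that. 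This half is fine.

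The reverse direction has a genuine gap, and it is exactly where you say the ``real work lies'' --- but the missing ingredient is not a counting argument, it is the choice of $\mathcal{V}$. The statement to be proved is the contrapositive ``$\neg(*)$ implies \emph{some} $\mathcal{V}$-limit is not pure,'' so you are free (and in fact obliged) to build a special $\mathcal{V}$ adapted to the witnesses of $\neg(*)$; for a generic $\mathcal{V}$ there is no reason your argument should close, since $\neg(*)$ only quantifies over partitions of initial segments and cannot control an arbitrary ultrafilter. Your attempt fixes an unspecified $\mathcal{V}$, extracts $P'$ from purity, and then feeds a hand-picked partition (singletons plus a catch-all block) into $\neg(*)$; when $\neg(*)$ returns the catch-all block you get only a unit vector in $\mathrm{range}(P^{(\ell(m))})$ nearly supported on a huge coordinate set, which is no contradiction, and the proposed pigeonhole over $\binom{|N_{m,\delta}|}{\ell(m)-1}$ subsets is not carried out and does not obviously interact with $\mathrm{rank}(P^{(\ell(m))})$, which may be enormous. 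The paper's resolution is cleaner: declare $S\subseteq\mathbb{N}\times\mathbb{N}$ ``small'' if there are $U\in\mathcal{U}$ and projections $(P_m)$ with $\inf\{\|P_me_j\|:(m,j)\in S,\ j\leq m\}>\|Q_mP_m\|$ for all $m\in U$; the full strength of $\neg(*)$ (its universal quantifier over \emph{all} partitions and \emph{all} $P_1,\dots,P_n$) is used precisely to show that finitely many small sets cannot cover, i.e.\ that the small sets form a proper ideal. One then chooses $\mathcal{V}$ disjoint from this ideal, whereupon every $(P_m)^\mathcal{U}$ with $\phi((P_m)^\mathcal{U})=1$ automatically satisfies $\|(Q_m)^\mathcal{U}(P_m)^\mathcal{U}\|=1$ while $\phi((Q_m)^\mathcal{U})\leq(1-\epsilon)^2<1$, killing maximal norm centredness. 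I would recommend replacing your reverse direction with this ideal construction; without it, the argument does not close.
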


\begin{proof}
Assuming the Kadison-Singer conjecture holds, we show that \[\mathcal{P}(\phi)=\{(P_m)^\mathcal{U}:(P_m)\subseteq\mathcal{P}(\mathcal{K}(H))\textrm{ and }\lim_{(m,n)\rightarrow\mathcal{V}}\phi_n(P_m)=1\}\] is maximal norm centred (in fact maximal norm linked).  So take some $(Q_m)\subseteq\mathcal{K}(H)$ such that $\lim_{m\rightarrow\mathcal{U}}||P_mQ_m||=1$ for all $(P_m)\subseteq\mathcal{P}(\mathcal{K}(H))$ with $(P_m)^\mathcal{U}\in\mathcal{P}(\phi)$.  By \cite{Bice2011} Theorem 5.1 (iv),
\begin{itemize}
\item[(**)] for any $\epsilon>0$ and $P\in\mathcal{P}(\mathcal{B}(H))$, we have $X_1,\ldots,X_n\subseteq\mathbb{N}$ such that $\bigcup X_k=\mathbb{N}$ and $||PP_{X_k}||^2+||P^\perp P_{X_k}||^2<1+\epsilon$, for all $k\leq n$
\end{itemize}
Note here that $n$ may be assumed to depend only on $\epsilon$, for if we had a sequence $(P'_n)\subseteq\mathcal{P}(\mathcal{B}(H))$ such that $||P'_nP_{X_k}||^2+||P'^\perp_nP_{X_k}||^2\geq 1+\epsilon$ for some $k\leq n$ whenever $X_1,\ldots,X_n$ is a partition of $\mathbb{N}$, then $\prod P'_n\in\prod_\mathbb{N}\mathcal{P}(\mathcal{B}(H))\subseteq\mathcal{P}(\mathcal{B}(\bigoplus_\mathbb{N}H))\approx\mathcal{P}(\mathcal{B}(H))$ would witness the failure of (**).  Thus we have $n$ such that, for each $m$, we have a partition $X_{m,1},\ldots,X_{m,n}$ of $\mathbb{N}$ with $||Q_mP_{X_{m,k}}||^2+||Q_m^\perp P_{X_{m,k}}||^2<1+\epsilon$, for all $k\leq n$.  Now $\bigcup_m\{m\}\times X_{m,k}\in\mathcal{V}$, for some $k\leq n$, and thus $(P_{X_{m,k}\cap\{1,\ldots,m\}})^\mathcal{U}\in\mathcal{P}(\phi)$.  By hypothesis, $\lim_{m\rightarrow\mathcal{U}}||P_{X_{m,k}\cap\{1,\ldots,m\}}Q_m||=1$ and hence $\lim_{m\rightarrow\mathcal{U}}||P_{X_{m,k}\cap\{1,\ldots,m\}}Q_m^\perp||<\sqrt{\epsilon}$, which implies that $\phi_j(Q_m)>1-\epsilon$, for all $m\in U$, for some $U\in\mathcal{U}$, and for all $j\leq m$ with $j\in X_{m,k}$.  Thus $\lim_{(m,j)\rightarrow\mathcal{V}}\phi_j(Q_m)\geq1-\epsilon$.  As $\epsilon>0$ was arbitrary, we must have $\lim_{(m,j)\rightarrow\mathcal{V}}\phi_j(Q_m)=1$, i.e. $(Q_m)^\mathcal{U}\in\mathcal{P}(\phi)$, as required.

On the other hand, if (*) fails then
\begin{itemize}
\item[$\neg$(*)] There exists $\epsilon>0$ such that, for all $n\in\mathbb{N}$, there exists $Q\in\mathcal{P}(\mathcal{K}(H))$ with $\sup||Qe_k||\leq1-\epsilon$ and $m_n\in\mathbb{N}$ such that whenever $X_1,\ldots,X_n$ is a partition of $m_n$ and $P_1,\ldots,P_n\in\mathcal{P}(\mathcal{K}(H))$ there exists $k\leq n$ with $\inf_{j\in X_k}||P_ke_j||\leq||QP_k||$.
\end{itemize}
Take such an $\epsilon>0$ and, for each $m$ with $m_n\leq m<m_{n+1}$, let $Q_m$ be this $Q$.  Now the collection of all $S\subseteq\mathbb{N}\times\mathbb{N}$ such that there exists $U\in\mathcal{U}$ and $(P_m)\subseteq\mathcal{P}(\mathcal{K}(H))$ with $\inf_{\{j:(m,j)\in S\textrm{ and }j\leq m\}}||P_me_j||>||Q_mP_m||$, for all $m\in U$, forms a proper ideal.  Thus we may let $\mathcal{V}$ be an ultrafilter on $\mathbb{N}\times\mathbb{N}$ containing no such $S$.  As $\sup_k||Q_me_k||\leq1-\epsilon$, for all $m$, it follows that $\phi((Q_m)^\mathcal{U})<1$, where $\phi$ is the $\mathcal{V}$-limit of the pure states determined by $(e_k)$.  But if we take some $(P_m)\subseteq\mathcal{P}(\mathcal{K}(H))$ with $\phi((P_m)^\mathcal{U})=1$ then, for any $\delta>0$, we have $V\in\mathcal{V}$ with $\inf_{(m,k)\in V}||P_me_k||\geq1-\delta$.  But $V$ is not one of the sets $S$ so that means that, for any $U\in\mathcal{U}$, there exists $m\in U$ such that $||Q_mP_m||\geq\inf_{\{j:(m,j)\in V\textrm{ and }j\leq m\}}||P_me_j||\geq1-\delta$.  As $\delta>0$ was arbitrary, $||(Q_m)^\mathcal{U}(P_m)^\mathcal{U}||=1$.  This shows that the collection of projections on which $\phi$ is $1$ is not maximal norm centred and hence $\phi$ is not pure.
\end{proof}

\section{The Pedersen-Petersen C*-Algebras}

To see that we can indeed have $(A^\mathcal{U})_0\subsetneqq(A_0)^\mathcal{U}$, we look at the algebras considered in \cite{PedersenPetersen1970}.  Specifically, given $n\in\mathbb{N}$, let $\mathbb{C}P^n$ denote $n$-dimensional complex projective space (i.e. the one dimensional subspaces of $\mathbb{C}^{n+1}$ with their natural topology), and let $\mathfrak{P}_n$ be the $C^*$-algebra of continuous sections of the following vector bundle
\[B_n=\{
(x,\begin{bmatrix}
a & \mathbf{b}\\
\mathbf{c} & d
\end{bmatrix})
:x\in\mathbb{C}P^n; a,d\in\mathbb{C};\mathbf{b},\overline{\mathbf{c}}\in x\}\]
(where $\overline{\mathbf{c}}=\overline{(c_1,\ldots,c_{n+1})}=(\overline{c}_1,\ldots,\overline{c}_{n+1})$), with multiplication and ${}^*$ defined pointwise by
\[\begin{bmatrix}
a & \mathbf{b}\\
\mathbf{c} & d
\end{bmatrix}
\begin{bmatrix}
a' & \mathbf{b}'\\
\mathbf{c}' & d'
\end{bmatrix}
=
\begin{bmatrix}
aa'+\mathbf{b}\cdot\mathbf{c}' & a\mathbf{b}'+d\mathbf{b}\\
a'\mathbf{c}+d\mathbf{c}' & dd'+\mathbf{b}'\cdot\mathbf{c}
\end{bmatrix}\qquad\textrm{and}\qquad\begin{bmatrix}
a & \mathbf{b}\\
\mathbf{c} & d
\end{bmatrix}^*
=
\begin{bmatrix}
\overline{a} & \overline{\mathbf{c}}\\
\overline{\mathbf{b}} & \overline{d}
\end{bmatrix}.\]

In \cite{PedersenPetersen1970} Lemma 3.5, they showed that the constant sections $P_n=\begin{bmatrix}1&\mathbf{0}\\ \mathbf{0}&0\end{bmatrix}$ and $Q_n=\begin{bmatrix}0&\mathbf{0}\\ \mathbf{0}&1\end{bmatrix}$ in $\mathfrak{P}_n$ require more than $n$ operators to witness their Cuntz-Pedersen equivalence.  We use essentially the same idea to prove the stronger statement that $P_n-Q_n$ requires more than $n$ *-commutators to witness its membership of $(\mathfrak{P}_n)_0$ (in fact, something even slightly stronger).

\begin{thm}\label{*com}
If $T\in\mathfrak{P}_n$ is a sum of $n$ *-commutators then $||T-(P_n-Q_n)||=1$
\end{thm}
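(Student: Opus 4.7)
The plan is to prove $\|T - (P_n - Q_n)\| \geq 1$ for every sum of $n$ *-commutators $T$, which combined with the trivial observation that $T=0$ gives $\|0 - (P_n - Q_n)\| = 1$ shows that the distance from $P_n - Q_n$ to the set of such sums equals $1$. The strategy is a quantitative refinement of the Pedersen--Petersen obstruction, driven by the topology of the tautological line bundle over $\mathbb{C}P^n$.

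First I would extract the local form of a *-commutator. Near $x \in \mathbb{C}P^n$, pick a unit vector $u \in x$ and write $\mathbf{b} = \beta u$, $\mathbf{c} = \gamma\bar{u}$; then a direct calculation from the product and involution of $\mathfrak{P}_n$ shows that the scalar $(1,1)$ entry of $aa^* - a^*a$ equals $\|\mathbf{b}\|^2 - \|\mathbf{c}\|^2$. This expression is trivialization-independent, so for $T = \sum_{k=1}^n (a_k a_k^* - a_k^* a_k)$ with $a_k = \begin{bmatrix}\alpha_k & \mathbf{b}_k\\ \mathbf{c}_k & \delta_k\end{bmatrix}$, the $(1,1)$ entry of $T$ defines a continuous function
\[
D(x) := \sum_{k=1}^n \bigl(\|\mathbf{b}_k(x)\|^2 - \|\mathbf{c}_k(x)\|^2\bigr)
\]
on $\mathbb{C}P^n$.

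The crux is the topological observation that the $\mathbf{b}_k$ are continuous sections of the tautological line bundle $\mathcal{O}(-1) \to \mathbb{C}P^n$, and any $n$ such sections must have a common zero. They assemble into a section of $\mathcal{O}(-1)^{\oplus n}$, whose top Chern class $c_n = (-h)^n$ is nonzero in $H^{2n}(\mathbb{C}P^n;\mathbb{Z})$, and this obstructs a nowhere-vanishing section on the $2n$-dimensional base. Picking such a common zero $x_0$, one has $D(x_0) = -\sum_k \|\mathbf{c}_k(x_0)\|^2 \leq 0$, so the $(1,1)$ entry of $(T - (P_n - Q_n))(x_0)$ is $D(x_0) - 1 \leq -1$. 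Since the operator norm of a matrix dominates the modulus of any of its entries, $\|T(x_0) - (P_n - Q_n)(x_0)\| \geq 1$, and the claim follows by taking the supremum in $x$. The main obstacle is the Chern-class (or obstruction-theory) input; the local matrix calculation and the entrywise norm bound are routine once that topological fact is in hand.
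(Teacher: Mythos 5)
Your proposal is correct, and the norm estimate part is essentially the paper's: both proofs hinge on the computation that the $(1,1)$ entry of $aa^*-a^*a$ is $|\mathbf{b}|^2-|\mathbf{c}|^2$, and both reduce the theorem to showing that certain sections of the tautological line bundle over $\mathbb{C}P^n$ must have a common zero. Where you diverge is in the topological engine and in which sections you feed into it. The paper assumes $\|T-(P_n-Q_n)\|<1$ for a sum of $k$ *-commutators, observes that this forces the sections $\mathbf{b}_i-\overline{\mathbf{c}}_i$ to have no common zero, lifts them to odd maps $f_i:S^{2n+1}\to\mathbb{C}$, and applies the Borsuk--Ulam theorem to the resulting antipodal map $f/\|f\|:S^{2n+1}\to S^{2k-1}$ to conclude $k>n$. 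You instead work directly on $\mathbb{C}P^n$ with the sections $\mathbf{b}_1,\dots,\mathbf{b}_n$ alone, invoke the nonvanishing of the top Chern (Euler) class of $\mathcal{O}(-1)^{\oplus n}$ to produce a common zero $x_0$, and evaluate there to get the $(1,1)$ entry of $T-(P_n-Q_n)$ at most $-1$; this gives the bound directly rather than by contradiction. Both topological inputs are standard and both arguments are complete. The Borsuk--Ulam route is marginally more robust in that it uses only the $\mathbb{Z}_2$-equivariance (oddness) of the lifted maps, not the full $U(1)$-equivariance that makes $\mathbf{b}_i$ an honest section of a complex line bundle, so it would survive weakenings of the bundle structure; your Chern-class route is arguably cleaner and makes the role of the tautological bundle explicit. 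One shared caveat: as literally stated the theorem cannot hold with equality for every such $T$ (rescaling a *-commutator keeps it a *-commutator), so the intended content is the lower bound $\|T-(P_n-Q_n)\|\geq1$, equivalently that the distance from $P_n-Q_n$ to the set of sums of $n$ *-commutators equals $1$; you flag and handle this correctly via $T=0$, while the paper's proof likewise only establishes the lower bound.
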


\begin{proof}
Note that
\[\begin{bmatrix}
a & \mathbf{b}\\
\mathbf{c} & d
\end{bmatrix}
\begin{bmatrix}
\overline{a} & \overline{\mathbf{c}}\\
\overline{\mathbf{b}} & \overline{d}
\end{bmatrix}
-
\begin{bmatrix}
\overline{a} & \overline{\mathbf{c}}\\
\overline{\mathbf{b}} & \overline{d}
\end{bmatrix}
\begin{bmatrix}
a & \mathbf{b}\\
\mathbf{c} & d
\end{bmatrix}
=
\begin{bmatrix}
|\mathbf{b}|^2-|\mathbf{c}|^2 & \ldots\\
\ldots & |\mathbf{c}|^2-|\mathbf{b}|^2
\end{bmatrix}.\]
Take $m_1,\ldots,m_k\in\mathfrak{P}_n$, i.e. $m_i(x)=\begin{bmatrix}
a_i(x) & \mathbf{b}_i(x)\\
\mathbf{c}_i(x) & d_i(x)
\end{bmatrix}$
and assume \[||\begin{bmatrix}
1 & \mathbf{0}\\
\mathbf{0} & -1
\end{bmatrix}-\sum(m_im_i^*-m_i^*m_i)||<1.\]
Identify the $(2n+1)$-sphere $S^{2n+1}$ with the set of norm $1$ vectors in $\mathbb{C}^{n+1}$ and for each $i$ define $f_i:S^{2n+1}\rightarrow\mathbb{C}$ by $f_i(\mathbf{e})\mathbf{e}=\mathbf{b}_i(\mathbb{C}\mathbf{e})-\overline{\mathbf{c}}_i(\mathbb{C}\mathbf{e})$.  Then $f=(f_1,\ldots,f_k)$ defines a map from $S^{2n+1}$ to a subset of $\mathbb{C}^k$ with $f(-\mathbf{e})=-f(\mathbf{e})$ avoiding $0$.  Thus $f/||f||$ is a continuous map from $S^{2n+1}$ to $S^{2k-1}$ taking antipodal points to antipodal points so, by the Borsuk-Ulam theorem, $k>n$.
\end{proof}

\begin{cor}\label{P_n}\label{C1} 
For $A=\bigoplus_n\mathfrak{P}_n$, we have $(A^\mathcal{U})_0\subsetneqq(A_0)^\mathcal{U}$.
In particular, there exists a trace $\tau$ on $A^{\cU}$ which is not locally trivial: 
for some $a\in A^{\cU}$ we have $\tau(a)\neq\lim_{n\to \cU}\tau_n(a)$ for any sequence $\tau_n$
in $T(A)$. 
\end{cor}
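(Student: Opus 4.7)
The plan is to exhibit an explicit element $a\in A^{\cU}$ witnessing both claims. Write $A=\bigoplus_n\mathfrak{P}_n$ as the $c_0$-direct sum, let $\pi_n\colon A\to\mathfrak{P}_n$ be the coordinate projection (a surjective $*$-homomorphism), and for each $m\in\bbN$ define $a_m\in A$ to be the element whose $n$-th coordinate is $P_n-Q_n$ for $n\leq m$ and $0$ for $n>m$. Since Pedersen and Petersen showed the constant sections $P_n$ and $Q_n$ to be Cuntz-Pedersen equivalent in $\mathfrak{P}_n$, i.e.\ $P_n-Q_n\in(\mathfrak{P}_n)_0\subseteq A_0$, each $a_m$ lies in $A_0$, and hence $a:=(a_m)^{\cU}\in(A_0)^{\cU}$.

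To show $a\notin(A^{\cU})_0$, I would argue by contradiction: if it were, then for any fixed $\epsilon<1$ we could write $a=T+r$ with $T=\sum_{k=1}^{K}(y_ky_k^*-y_k^*y_k)$ a finite sum of $*$-commutators in $A^{\cU}$ and $\|r\|<\epsilon$. The coordinate projection $\pi_n$ induces a $*$-homomorphism $A^{\cU}\to\mathfrak{P}_n^{\cU}$, also denoted $\pi_n$, and because $\pi_n(a_m)=P_n-Q_n$ for every $m\geq n$ and $\cU$ is non-principal, $\pi_n(a)$ is the diagonal image of $P_n-Q_n$. Choosing representing sequences $\pi_n(y_k)=(z_{k,m})^{\cU}$, the element $\pi_n(T)$ has representing sequence $T_m=\sum_{k=1}^{K}(z_{k,m}z_{k,m}^*-z_{k,m}^*z_{k,m})$, each a sum of $K$ $*$-commutators in $\mathfrak{P}_n$. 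The bound $\|a-T\|<\epsilon$ now transfers to $\|(P_n-Q_n)-T_m\|<\epsilon$ for $\cU$-many $m$. Choosing $n\geq K$ and padding $T_m$ with zeros makes it a sum of $n$ $*$-commutators, so \autoref{*com} (or rather, the estimate its proof actually gives) forces $\|T_m-(P_n-Q_n)\|\geq 1$, contradicting $\epsilon<1$. This establishes $(A^{\cU})_0\subsetneqq(A_0)^{\cU}$.

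For the second assertion, $a$ is self-adjoint and $a\notin(A^{\cU})_0$, so by the Cuntz-Pedersen duality of \cite{CuntzPedersen1979} between self-adjoint tracial functionals on $A^{\cU}$ and the dual of $(A^{\cU})_{\mathrm{sa}}/(A^{\cU})_0$, together with Hahn-Banach separation and the Jordan decomposition of self-adjoint tracial functionals as differences of traces, there exists a trace $\tau\in T(A^{\cU})$ with $\tau(a)\neq 0$. On the other hand, any bounded trace on $A$ annihilates $A_0$, so for any sequence $(\tau_m)\subseteq T(A)$ we have $\tau_m(a_k)=0$ for all $k$, whence $\tau_m(a)=\lim_{k\to\cU}\tau_m(a_k)=0$ and therefore $\lim_{m\to\cU}\tau_m(a)=0\neq\tau(a)$. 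Thus $\tau$ is a trace on $A^{\cU}$ that is not locally trivial, as witnessed at $a$.

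The main step is the coordinate-projection argument, which translates the approximability of $a$ by a \emph{fixed} finite number of $*$-commutators in $A^{\cU}$ into the same approximability of $P_n-Q_n$ in the individual algebras $\mathfrak{P}_n$, where \autoref{*com} takes over once $n$ exceeds that bound. The trace-theoretic half is then a standard application of Cuntz-Pedersen duality combined with the vanishing of every bounded trace on $A_0$.
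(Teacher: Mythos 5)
Your proof is correct and takes exactly the route the paper intends: the corollary is stated without an explicit proof, being an immediate consequence of \autoref{*com} together with the Cuntz--Pedersen duality discussion of \S\ref{S.Linear}, and your diagonal element built from the $P_n-Q_n$, the coordinate-projection reduction to \autoref{*com} (reading it as the lower bound $\|T-(P_n-Q_n)\|\geq 1$ its proof gives), and the Hahn--Banach/Jordan-decomposition production of a trace not vanishing on $a$ are precisely the details being left to the reader.
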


\begin{qst}
Can \autoref{P_n} be proved for some C*-algebra $A$ with a unique trace?  What about no traces, or finitely many extremal traces?  If 
so, can we also ensure that~$A$ has other regularity properties
such as separability, simplicity, or nuclearity? 
\end{qst}

By \cite[Theorem~8]{Ozawa2013} an exact, $\mathcal{Z}$-stable C*-algebra $A$ cannot satisfy the 
conclusion of \autoref{P_n}. Therefore any example of a simple 
nuclear C*-algebra with this property would
have to be nonclassifiable (for terminology see e.g., \cite{ElliotToms2008}).  


We can  use the Borsuk-Ulam theorem in a similar manner to the proof of \autoref{*com} to get another interesting fact about $\mathfrak{P}_n$, for $n\geq2$.  Specifically, take $m\in\mathfrak{P}_n$ as above and let $b(\mathbf{e})\mathbf{e}=\mathbf{b}(\mathbb{C}\mathbf{e})$ and $c(\mathbf{e})\mathbf{e}=\mathbf{c}(\mathbb{C}\mathbf{e})$.  Then $f=(b,c)$ defines a map from $S^{2n+1}$ to a subset of $\mathbb{C}^2$ with $f(-\mathbf{e})=-f(\mathbf{e})$.  If this map avoided $0$ then $f/||f||$ would be a continuous map from $S^{2n+1}$ to $S^3$ taking antipodal points to antipodal points, contradicting the Borsuk-Ulam theorem as above when $n\geq2$.  So $m(x)$ has to be diagonal for some $x\in\mathbb{C}P^n$ and, in particular, there can not be any partial isometry $U\in\mathfrak{P}_n$ such that $U^2=0$.  This means $\mathfrak{P}_n$ can not be isomorphic to any C*-algebra $C(X,M_2)$ of continuous functions from some topological space $X$ to $M_2$

To prove the same fact about $\mathfrak{P}_1$ requires more work, using the cohomology theory explained in \cite{RaeburnWilliams1998}.  On the plus side, this theory allows us to show that $\mathfrak{P}_1$ is not even isomorphic to a corner of such a C*-algebra.  In fact, as $\mathfrak{P}_1$ is 2-homogeneous and its spectrum $\mathbb{C}P^1$ has (covering) dimension 2, this may well be the lowest dimensional example of a homogeneous C*-algebra known to be so twisted that it has this property.

\begin{thm}\label{P1nontriv}
$\mathfrak{P}_1$ is not isomorphic to any C*-algebra of the form $PC(X,M_n)P$, where $X$ is a topological space and $P$ is a (everywhere rank 2) projection in $M_n$.
\end{thm}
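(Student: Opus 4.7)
The plan is to argue by contradiction, using the cohomological classification of $M_2$-bundles over $\mathbb{C}P^1$ from \cite{RaeburnWilliams1998}. Since $P\in M_n$ is a rank-$2$ projection, the corner $PC(X,M_n)P=C(X,PM_nP)$ is isomorphic to $C(X,M_2)$, i.e.\ the section algebra of the trivial $M_2$-bundle over $X$. An isomorphism $\mathfrak{P}_1\cong PC(X,M_n)P$ would match spectra, forcing $X\cong\mathbb{C}P^1$, so the task reduces to showing that the $M_2$-bundle $B_1$ whose section algebra is $\mathfrak{P}_1$ is not the trivial $M_2$-bundle over $\mathbb{C}P^1$.

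First I would realise $B_1$ concretely as an endomorphism bundle. Taking the Peirce decomposition of $B_1$ with respect to the complementary rank-one projections $P_1,Q_1$, each diagonal corner is a trivial line bundle, the off-diagonal corner $P_1 B_1 Q_1$ has fibre $\{[0,\mathbf{b};0,0]:\mathbf{b}\in x\}$ over $x\in\mathbb{C}P^1$ and so is precisely the tautological line bundle $\tau$, while $Q_1 B_1 P_1$ is its conjugate $\bar\tau$. Verifying that the pairing $\mathbf{b}\cdot\mathbf{c}$ corresponds to the natural contraction $\tau\otimes\bar\tau\to\mathbf{1}$ then exhibits $B_1\cong\mathrm{End}(\tau\oplus\mathbf{1})$ as $M_2$-bundles.

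Next I would invoke the classification of $PU(2)$-bundles over $\mathbb{C}P^1\cong S^2$ developed in \cite{RaeburnWilliams1998}. The central extension $1\to U(1)\to U(2)\to PU(2)\to 1$ gives the sheaf-cohomology sequence
\[
H^1(\mathbb{C}P^1,U(1))\to H^1(\mathbb{C}P^1,U(2))\to H^1(\mathbb{C}P^1,PU(2))\to H^3(\mathbb{C}P^1,\mathbb{Z}).
\]
Using $H^3(S^2,\mathbb{Z})=0$ and $H^1(\mathbb{C}P^1,U(j))\cong\mathbb{Z}$ via the first Chern class, and observing that the leftmost map is multiplication by $2$ (since $c_1(V\otimes L)=c_1(V)+2c_1(L)$ on rank-$2$ bundles), one obtains $H^1(\mathbb{C}P^1,PU(2))\cong\mathbb{Z}/2\mathbb{Z}$, with the class of $\mathrm{End}(V)$ given by $c_1(V)\bmod 2$. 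Since $c_1(\tau\oplus\mathbf{1})=-1$ is odd whereas the trivial rank-$2$ bundle has $c_1=0$, the two $M_2$-bundles lie in distinct classes, yielding the desired contradiction.

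The main obstacle will be the bundle identification in the second paragraph: one has to check not merely that the four Peirce summands coincide as vector bundles, but also that the somewhat twisted multiplication and involution on $B_1$ transport faithfully to the standard endomorphism-algebra structure on $\mathrm{End}(\tau\oplus\mathbf{1})$, and in particular that the tautological line bundle appears with the correct orientation. Once that identification is in hand, the cohomological computation reduces to a routine application of line-bundle theory on $S^2$.
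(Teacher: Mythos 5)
Your opening reduction is where the argument breaks. In this statement $P$ is a projection in $C(X,M_n)$ of constant rank $2$, not a scalar matrix: its range is a rank-$2$ subbundle $E=\mathrm{ran}\,P$ of $X\times\mathbb{C}^n$, and $PC(X,M_n)P$ is the section algebra of $\mathrm{End}(E)$, which is the trivial $M_2$-bundle only when $E$ becomes trivial after twisting by a line bundle. (This is why the surrounding text speaks of ``a corner of'' $C(X,M_2)$ and of bundles ``arising from a $U(2)$-bundle''.) So what must be shown is not that $B_1$ is a nontrivial $M_2$-bundle over $\mathbb{C}P^1$, but that $B_1$ is not isomorphic to $\mathrm{End}(E)$ for \emph{any} rank-$2$ Hermitian bundle $E$. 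Your second paragraph asserts exactly the negation of this: if $B_1\cong\mathrm{End}(\tau\oplus\mathbf{1})$ as C*-bundles, then taking $X=\mathbb{C}P^1$, $n=3$ and $P$ the projection onto $\tau\oplus\mathbf{1}\subseteq\mathbb{C}P^1\times\mathbb{C}^3$ realizes $\mathfrak{P}_1$ in precisely the forbidden form. Thus the proposal cannot be completed as written: if the identification you defer to the end goes through, it refutes the theorem rather than proving it, and if it fails, the cohomological computation that follows has nothing to apply to.

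The paper's own proof takes a different and finer route: it computes the obstruction to lifting the transition cocycle $\mathrm{Ad}(\mathrm{diag}(z_{jk},\overline{z}_{jk}))$ of $B_1$ to an $SU(2)$-valued $1$-cocycle, a class in $H^2(\mathbb{C}P^1,\{\pm1\})\cong\mathbb{Z}/2\mathbb{Z}$, and shows it is the nontrivial class. Note, however, that your (correct) observation that $H^2(\mathbb{C}P^1,\underline{U(1)})\cong H^3(S^2,\mathbb{Z})=0$ cuts both ways: it says that \emph{every} $PU(2)$-cocycle over $\mathbb{C}P^1$ lifts to a $U(2)$-cocycle, so no invariant of the $PU(2)$-isomorphism class of $B_1$ can separate it from the endomorphism bundles. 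Concretely, $\mathrm{Ad}(\mathrm{diag}(z_{jk},\overline{z}_{jk}))=\mathrm{Ad}(\mathrm{diag}(z_{jk}^2,1))$, and $(z_{jk}^2)$ is an honest $U(1)$-valued $1$-cocycle because $\delta_{jkl}^2=1$; the $\{\pm1\}$-valued coboundary freedom governing the $SU(2)$-obstruction is strictly smaller than the $U(1)$-valued freedom available for $U(2)$-lifts. You should resolve this tension head-on before choosing a strategy: either carry out your Peirce-decomposition identification in full (checking the multiplication and the involution, which are the only places a twist could hide), or isolate a genuine obstruction that survives the passage from $\{\pm1\}$ to $U(1)$ coefficients.
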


\begin{proof}
We first define some appropriate local trivializations of the bundle $B_1$.  Consider the open cover $U_0,\ldots,U_3$ of $\mathbb{C}P^1$ where $U_0=\{\mathbb{C}(1,z):|z|<1\}$ and $U_k=\mathbb{C}P^1\setminus U'_k$, for $k=1,2,3$, where
\begin{eqnarray*}
U'_1 &=& \{\mathbb{C}(1,re^{i\theta}):0\leq r\leq1\textrm{ and }-\pi/2\leq\theta\leq\pi/2\},\\
U'_2 &=& \{\mathbb{C}(1,re^{i\theta}):0\leq r\leq1\textrm{ and }-\pi\leq\theta\leq-\pi/2\},\textrm{ and}\\
U'_3 &=& \{\mathbb{C}(1,re^{i\theta}):0\leq r\leq1\textrm{ and }\pi/2\leq\theta\leq\pi\}.
\end{eqnarray*}
Given $z\in\mathbb{C}$, let
\begin{eqnarray*}
\mathbf{z}'_0 &=& (1,z),\\
\mathbf{z}'_1 &=& (-iz,1-z),\\
\mathbf{z}'_2 &=& (z,1-z),\textrm{ and}\\
\mathbf{z}'_3 &=& (iz,1-z).
\end{eqnarray*}
Note that, for $k=0,\ldots,3$, the map $z\mapsto\mathbb{C}\mathbf{z}'_k$ is a one-to-one from $\mathbb{C}$ to $\mathbb{C}P^1$ minus the points $\mathbb{C}(0,1),\mathbb{C}(1,-i),\mathbb{C}(1,-1),$ and $\mathbb{C}(1,i)$ respectively, which are not in $U_0,\ldots,U_3$ respectively.  So we may let $\mathbf{z}_k=||\mathbf{z}'_k||^{-1}\mathbf{z}'_k$ and
\[h_k(\mathbb{C}\mathbf{z}_k,\begin{bmatrix}a&b\\ c&d\end{bmatrix})=(\mathbb{C}\mathbf{z}_k,\begin{bmatrix}a&b\mathbf{z}_k\\ c\overline{\mathbf{z}}_k&d\end{bmatrix}),\textrm{ for }\mathbb{C}\mathbf{z}_k\in U_k.\]

Let $\sqrt{re^{i\theta}}=\sqrt{r}e^{i\theta/2}$, for $r\geq0$ and $-\pi<\theta<\pi$, i.e. we are now specifying that $\sqrt{}$ always denotes a certain continuous branch of the square-root function on $\mathbb{C}$ minus the negative reals.  For $j,k=0,\ldots,3$ and $j>k$, define $z'_{jk}$ on $U_{jk}=U_j\cap U_k$ by
\begin{eqnarray*}
z'_{10}(\mathbb{C}(1,z)) &=& \sqrt{z+i},\\
z'_{20}(\mathbb{C}(1,z)) &=& \sqrt{z+1},\\
z'_{30}(\mathbb{C}(1,z)) &=& \sqrt{z-i},\\
z'_{21}(\mathbb{C}(1,z)) &=& \sqrt{(z+1)/(z+i)},\\
z'_{31}(\mathbb{C}(1,z)) &=& \sqrt{(z-i)/(z+i)},\\
z'_{32}(\mathbb{C}(1,z)) &=& \sqrt{(z-i)/(z+1)},\\
\end{eqnarray*}
(with $z'_{jk}(\mathbb{C}(0,1))=1$ for $j,k\neq0$) and let $z_{jk}=z'_{jk}/|z'_{jk}|$.  Then, for all $x$ in the appropriate domains, we have
\[h_j^{-1}\circ h_k(x,m)=(x,\mathrm{Ad}(\begin{bmatrix}z_{jk}(x)&0\\ 0&\overline{z}_{jk}(x)\end{bmatrix})(m)).\]
For each $j,k,l=0,1,2,3$ with $j>k>l$ there exists a unique $\delta_{jkl}\in\{-1,1\}$ such that
\[z_{jl}(x)=\delta_{jkl}z_{jk}(x)z_{kl}(x),\textrm{ for all }x\in U_{jkl}=U_j\cap U_k\cap U_l.\]
Direct calculation shows that (for our given choice of the branch of $\sqrt{}$) $\delta_{210}=\delta_{320}=\delta_{321}=1$ while $\delta_{310}=-1$.  This $(\delta_{jkl})$ is a (2-)cocycle (for the trivial reason that $U_0\cap\ldots\cap U_3=\emptyset$) but not a (2-)coboundary, i.e. it represents a (in fact the) non-identity element of the second \v{C}ech cohomology group $H^2((U_k),\mathcal{S})$, where $\mathcal{S}$ is the sheaf of germs of continuous (and hence constant, as all intersections of the $(U_k)$ are connected) $\{-1,1\}$-valued functions on $\mathbb{C}P^1$ (probably the easiest way to see this is to identify this group with the second simplicial cohomology group with coefficients in $\mathbb{Z}_2\approx\{1,-1\}$ of the boundary of the 3-simplex \textendash\, see \cite{Munkres1984} \S73).  This means that $B_1$ is not isomorphic to any $\mathrm{Aut}(M_2)$-bundle where the transition functions $\mathrm{Ad}(u_{jk})$ come from a 1-cocycle $(u_{jk})$, by \cite{RaeburnWilliams1998} Lemma 4.81 (note we are not using the exact sequence $1\rightarrow\mathbb{T}\rightarrow U(H)\rightarrow\mathrm{Aut}(K(H))\rightarrow1$ as done there, but rather the exact sequence $1\rightarrow\{-1,1\}\rightarrow SU(2)\rightarrow\mathrm{Aut}(M_2)\rightarrow1$, i.e. we are restricting to unitaries of determinant $1$, however the same argument applies verbatim \textendash\, this is hinted at, although not quite stated explicitly, in \cite{RaeburnWilliams1998} Hooptedoodle 4.91).  Thus $B_1$ is not isomorphic to the bundle of operators naturally arising from a $U(2)$-bundle where the fibre is a 2-dimensional Hilbert space.  But any C*-algebra of the form $PC(X,M_n)P$, where $P$ is some everywhere rank 2 projection, will be (isomorphic to) the C*-algebra of continuous sections of such a bundle.  As the bundles are not isomorphic, the C*-algebras of sections can not be isomorphic either, by \cite{RaeburnWilliams1998} Hooptedoodle 4.90.
\end{proof}

\begin{qst}
Does \autoref{P1nontriv} also hold for $\mathfrak{P}_n$ where $n\geq2$?
\end{qst}

In order to exhibit another interesting property of $\mathfrak{P}_1$, let us digress a moment to discuss \emph{commutators}, i.e. operators of the form $[a,b]=ab-ba$, for some $a$ and $b$ in a C*-algebra $A$.  Likewise, we call an operator of the form $[a,a^*]$, for some $a\in A$, a \emph{*-commutator}.  For any subset $S\subseteq A$, we let $\mathfrak{c}(S)=\{[a,b]:a,b\in S\}$ and $\mathfrak{c}^*(S)=\{[a,a^*]:a\in S\}$ (so $A_0=\overline{\mathrm{span}}(\mathfrak{c}^*(A))$, by \cite{CuntzPedersen1979} Proposition 2.5).

Note that for self-adjoint $a$ and $b$,
\begin{eqnarray*}
(a+ib)^*(a+ib)-(a+ib)(a+ib)^* &=& 2i(ab-ba)\qquad\textrm{and hence}\\
\mathfrak{c}^*(A) &=& i\mathfrak{c}(A_\mathrm{sa})
\end{eqnarray*}
Furthermore, if $a,b,c,d\in A_\mathrm{sa}$ satisfy $[a+ib,c+id]=[a+ib,c+id]^*$ then we have $[a,c]-[b,d]=0$ so
\begin{eqnarray*}
[a+ib,c+id] &=& i([b,c]+[a,d])\qquad\textrm{and hence}\\
\mathfrak{c}^*(A)\quad\subseteq\quad\mathfrak{c}(A)_{\mathrm{sa}} &\subseteq& \mathfrak{c}^*(A)+\mathfrak{c}^*(A)
\end{eqnarray*}
To see that the first inclusion here can be strict, we need look no further than $\mathfrak{P}_1$.  Indeed, what we have called a *-commutator is usually just referred to in the literature as a self-adjoint commutator.  We avoid this confusing terminology for precisely this reason, leaving the term ``self-adjoint commutator'' to naturally refer merely to a commutator that is self-adjoint, i.e. an element of $\mathfrak{c}(A)_\mathrm{sa}$.

\begin{thm}\label{sacom}
Not all self-adjoint commutators are *-commutators, specifically $\mathfrak{c}^*(\mathfrak{P}_1)\subsetneqq\mathfrak{c}(\mathfrak{P}_1)_\mathrm{sa}$
\end{thm}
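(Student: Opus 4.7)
The plan is to use $P_1-Q_1$ itself as the witness.  Since $P_1-Q_1$ is self-adjoint, it belongs to $\mathfrak{c}(\mathfrak{P}_1)_{\mathrm{sa}}$ as soon as it is realised as a commutator $[a,b]$, and by \autoref{*com} with $n=1$ every single *-commutator $T\in\mathfrak{P}_1$ satisfies $\|T-(P_1-Q_1)\|=1$, so $P_1-Q_1\notin\mathfrak{c}^*(\mathfrak{P}_1)$.  The task thus reduces to exhibiting $a,b\in\mathfrak{P}_1$ with $[a,b]=P_1-Q_1$.

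I would look for $a,b$ of purely off-diagonal form, $a=\begin{bmatrix}0&\mathbf{b}_a\\ \mathbf{c}_a&0\end{bmatrix}$ and $b=\begin{bmatrix}0&\mathbf{b}_b\\ \mathbf{c}_b&0\end{bmatrix}$.  With both scalar diagonals equal to $0$, the multiplication rule forces the off-diagonal entries of $[a,b]$ to vanish automatically, while $[a,b]_{11}=\mathbf{b}_a\cdot\mathbf{c}_b-\mathbf{b}_b\cdot\mathbf{c}_a$ and $[a,b]_{22}=-[a,b]_{11}$.  The whole problem therefore collapses to finding four continuous sections of $B_1$ with
\[
\mathbf{b}_a(x)\cdot\mathbf{c}_b(x)-\mathbf{b}_b(x)\cdot\mathbf{c}_a(x)=1\quad\text{for all }x\in\mathbb{C}P^1.
\]
Working in the chart $z\mapsto\mathbb{C}(1,z)$, the choice I expect to succeed is
\begin{align*}
\mathbf{b}_a(z)&=\tfrac{(1,z)}{1+|z|^2}, &\mathbf{c}_b(z)&=\tfrac{(1,\bar z)}{1+|z|^2},\\
\mathbf{b}_b(z)&=\tfrac{i\bar z\,(1,z)}{1+|z|^2}, &\mathbf{c}_a(z)&=\tfrac{iz\,(1,\bar z)}{1+|z|^2},
\end{align*}
for which a direct calculation gives $\mathbf{b}_a\cdot\mathbf{c}_b=\tfrac{1}{1+|z|^2}$ and $\mathbf{b}_b\cdot\mathbf{c}_a=-\tfrac{|z|^2}{1+|z|^2}$, with difference identically~$1$.

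The main obstacle is continuity across the point at infinity, where the fibre is forced to be $x_\infty=\mathbb{C}(0,1)$.  For $\mathbf{b}_a$ and $\mathbf{c}_b$ the magnitudes tend to $0$, so those are trivial.  For $\mathbf{b}_b$ and $\mathbf{c}_a$ the use of $i\bar z$ (resp.\ $iz$) rather than $iz$ (resp.\ $i\bar z$) is crucial: it makes the second coordinate equal $i|z|^2/(1+|z|^2)$, which converges to $i$ without phase oscillation and yields the limit $(0,i)\in x_\infty$; using $iz$ on both would produce a $z^2$ in the second coordinate whose argument does not converge.  This subtlety reflects the non-triviality of the tautological line bundle $\tau=\mathcal{O}(-1)$ over $\mathbb{C}P^1$: every section of $\tau$ must vanish somewhere, so no single pair $(\mathbf{b},\mathbf{c})$ can achieve $\mathbf{b}\cdot\mathbf{c}\equiv 1$, and two pairs whose zero sets are complementary are genuinely needed to produce the constant $1$ as a skew product.
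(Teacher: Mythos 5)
Your proposal is correct and is essentially the paper's own proof: the paper also exhibits $P_1-Q_1$ as a commutator $[X,Y]$ with $X=U+iV$, $Y=U^*+iV^*$ for off-diagonal partial-isometry sections $U,V$, and these $X,Y$ written out in the chart $z\mapsto\mathbb{C}(1,z)$ are exactly your $a,b$. The appeal to \autoref{*com} with $n=1$ to rule out $P_1-Q_1$ being a *-commutator is likewise the same.
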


\begin{proof}
Let $U,V\in \mathfrak{P}_1$ be such that, for all $z\in\mathbb{C}$,
\begin{eqnarray*}
U(\mathbb{C}(1,z))=\begin{bmatrix}0&\frac{(1,z)}{1+|z|^2}\\ \mathbf{0}&0\end{bmatrix} &\textrm{ and }& V(\mathbb{C}(z,1))=\begin{bmatrix}0&\mathbf{0}\\ \frac{(\overline{z},1)}{1+|z|^2}&0\end{bmatrix},\\
\end{eqnarray*}
As $\mathbb{C}(a,b)=\mathbb{C}(1,b/a)=\mathbb{C}(a/b,1)$, for $a,b\in\mathbb{C}\setminus\{0\}$,
\[UU^*(\mathbb{C}(a,b))=\begin{bmatrix}0&\frac{(1,b/a)}{1+|b/a|^2}\\ \mathbf{0}&0\end{bmatrix}\begin{bmatrix}0&\mathbf{0}\\ \frac{(1,\overline{b/a})}{1+|b/a|^2}&0\end{bmatrix}=\begin{bmatrix}\frac{|a|^2}{|a|^2+|b|^2}&\mathbf{0}\\ \mathbf{0}&0\end{bmatrix},\textrm{ and likewise}\]
\[U^*U(\mathbb{C}(a,b))=\begin{bmatrix}0&\mathbf{0}\\ \mathbf{0}&\frac{|a|^2}{|a|^2+|b|^2}\end{bmatrix},VV^*(\mathbb{C}(a,b))=\begin{bmatrix}0&\mathbf{0}\\ \mathbf{0}&\frac{|b|^2}{|a|^2+|b|^2}\end{bmatrix},\]
\[\textrm{ and }V^*V(\mathbb{C}(a,b))=\begin{bmatrix}\frac{|b|^2}{|a|^2+|b|^2}&\mathbf{0}\\ \mathbf{0}&0\end{bmatrix}.\]
So if $X=U+iV$ and $Y=U^*+iV^*$ then $XY-YX=UU^*-VV^*-U^*U+V^*V=\begin{bmatrix}1&\mathbf{0}\\ \mathbf{0}&-1\end{bmatrix}$.  However, we know this is not a *-commutator by \autoref{*com}.
\end{proof}

It would be interesting to know if \autoref{sacom} holds for a more elementary kind of C*-algebra.  Indeed, it seems plausible that in $C(S^2,M_2)$ the difference of the Bott projection and its orthogonal complement could be another self-adjoint commutator that is not a *-commutator.  It would also be nice to know if the inclusion $\mathfrak{c}(A)_{\mathrm{sa}}\subseteq\mathfrak{c}^*(A)+\mathfrak{c}^*(A)$ can be strict for some C*-algebra $A$.  Indeed, if we recursively define $\mathfrak{c}_{n+1}^{(*)}(A)=\mathfrak{c}_n^{(*)}(A)+\mathfrak{c}^{(*)}(A)$, the following question naturally arises.

\begin{qst}
For any $n\in\mathbb{N}$, can we find C*-algebras $A$ such that either or both inclusions in $\mathfrak{c}^*_n(A)\subseteq\mathfrak{c}_n(A)_{\mathrm{sa}}\subseteq\mathfrak{c}^*_{2n}(A)$ are strict?
\end{qst}

\bibliography{maths}{}
\bibliographystyle{plainurl}

\end{document}